 \newtheorem{thm}{Theorem}[section]
 \newtheorem{cor}[thm]{Corollary}
 \newtheorem{lem}[thm]{Lemma}
 \newtheorem{prop}[thm]{Proposition}
 \theoremstyle{definition}
 \newtheorem{defn}[thm]{Definition}
 \theoremstyle{remark}
 \newtheorem{rem}[thm]{Remark}
 \numberwithin{equation}{section}
\newcommand{\spanS}{{\rm span}}
\newcommand{\signS}{{\rm sign}}
\newcommand{\suppS}{{\rm supp}}
\begin{document}

%-------------------------------------------------------------------------
% editorial commands: to be inserted by the editorial office
%
%\firstpage{1} \volume{228} \Copyrightyear{2004} \DOI{003-0001}
%
%
%\seriesextra{Just an add-on}
%\seriesextraline{This is the Concrete Title of this Book\br H.E. R and S.T.C. W, Eds.}
%
% for journals:
%
%\firstpage{1}
%\issuenumber{1}
%\Volumeandyear{1 (2004)}
%\Copyrightyear{2004}
%\DOI{003-xxxx-y}
%\Signet
%\commby{inhouse}
%\submitted{March 14, 2003}
%\received{March 16, 2000}
%\revised{June 1, 2000}
%\accepted{July 22, 2000}
%
%
%
%---------------------------------------------------------------------------
%Insert here the title, affiliations and abstract:
%

\title[Factorization property in rearrangement invariant spaces]
 {Factorization property in rearrangement invariant spaces}

%----------Author 1

\author[Kh. V. Navoyan]{Kh. V. Navoyan}

\address{
Department of Mathematics and Statistics\\
Thompson Rivers University\\
Kamloops\\
BC V2C 0C8\\
Canada}

\email{khazhakanush@gmail.com}

%\thanks{This work was completed with the support of our
%\TeX-pert.}
%----------Author 2
%\author{A Second Author}
%\address{The address of\br
%the second author\br
%sitting somewhere\br
%in the world}
%\email{dont@know.who.knows}
%----------classification, keywords, date

\subjclass{Primary 46B09, 46B25, 46B28, 47A68}

\keywords{Factorization of operators, classical Banach spaces, unconditional basis, rearrangement invariant space, Haar basis, faithful Haar system, Rademacher functions, block basis, weakly null sequence, operator with a large diagonal}

\date{March 10, 2023}
%----------additions
%\dedicatory{To my boss}
%%% ----------------------------------------------------------------------

\begin{abstract}
Let $X$ be a Banach space with a basis $(e_k)_k$ and biorthogonals $(e^\ast_k)_k$. An operator on $X$ is said to have a {\it large diagonal} if $\inf\limits_{k} |e_k^\ast(T(e_k))| > 0$. The basis $(e_k)_k$ is said to have the {\it factorization property} if the identity factors through any operator with a large diagonal. Under the assumption that the Rademacher sequence is weakly null, we study the factorization property of the Haar system in a Haar system space. A Haar system space is the completion of the span of characteristic functions of dyadic intervals with respect to a rearrangement invariant norm. We show that every bounded operator with a large diagonal on a Haar system space is approximatively a factor of some diagonal operator with a large diagonal. Moreover, when the Haar system is an unconditional basis for a Haar system space, it has the factorization property.   
\end{abstract}

%%% ----------------------------------------------------------------------
\maketitle
%%% ----------------------------------------------------------------------
%\tableofcontents
\allowdisplaybreaks

\section{Introduction}

\noindent In his paper \cite{Andrew} on perturbations of Schauder bases, Andrew shows the following: If $T$ is an operator on $L^p$, $1 <p < \infty$, which has the property that as a matrix with respect to the Haar basis the diagonal is bounded away from zero ($T$ has a {\it large diagonal}), then the image $TL^p$ has a complemented subspace isomorphic to $L^p$. Actually he showed that the identity on $L^p$ factors through $T$. In our paper we generalize this fact to rearrangement invariant (r.i.) Haar system spaces. Let us recall some definitions.

\begin{defn}\cite[Definition 1.1]{Andrew} Let $C > 0$. A normalized Schauder basis $\{x_n\}$ for a Banach space $X$ is {\it$C$-perturbable} if for each $\delta > 0$ and each bounded linear operator $T: X\to X$ satisfying $\|Tx_n - x_n\| < C - \delta$ for all $n$, the space $T(X)$ contains an isomorphic copy of $X$. The largest $C$ for which $\{x_n\}$ is $C$-perturbable is called the {\it perturbation constant} of the basis $\{x_n\}$.  
\end{defn}

\begin{defn}\cite{LLM}
Let $X$ be a Banach space with a normalized Schauder basis $(e_k)_k$, and let $(e_k^\ast)_k$ be the sequence of the biorthogonals.
\begin{itemize}

\item[(i)] If an operator $T$ on $X$ satisfies $\inf\limits_{k} |e_k^\ast(T(e_k))| > 0$ then we say that $T$ has a {\it large diagonal}.

\item[(ii)] An operator $T$ on $X$ satisfying $e_m^\ast (T(e_k)) = 0$ whenever $k\neq m$, is called a {\it diagonal operator}.

\item[(iii)] We say that {\it the identity operator $I_X$ on $X$ factors through $T$} if there are bounded linear operators $R, S: X\to X$ with $I_X = STR$.

\item[(iv)] We say that the basis $(e_k)_k$ has the {\it factorization property} if whenever $T: X\to X$ is an operator with a large diagonal, then the identity of $X$ factors through $T$.
\end{itemize}
\end{defn}

\noindent The problem of factoring the identity through an operator with a large diagonal originates from a study by Pe\l czy\'nski \cite{Pel.}, where he shows that every infinite dimensional subspace of $\ell^p$, $1 \le p < \infty$, and $c_0$ contains a further subspace which is complemented and isomorphic to the whole space. Actually he deduces that $\ell^p$ (or $c_0$) is {\it prime}, meaning that every infinite dimensional complemented subspace of $\ell^p$ (correspondingly, of $c_0$) is isomorphic to $\ell^p$ ($c_0$).  

\noindent As noted in \cite[Remark 2.4]{StR}, it is easy to see that a basis that has the factorization property is also $1$-perturbable, but as shown in \cite[Propositions 2.5 and 2.7]{StR}, the James space with its shrinking basis is perturbable, but does not have the factorization property. 

\noindent Also, in \cite{LLM}, it was shown that for $1 \le p, q < \infty$, the identity on the biparametric Hardy space $H^p(H^q)$ factors through every operator $T: H^p(H^q) \to H^p(H^q)$ with a large diagonal. In addition, in \cite{LLM} the authors show that there exists a Banach space with an unconditional basis which does not have the factorization property.     

\noindent In \cite{StR} Lechner et al. introduced the concept of {\it strategical reproducibility} of bases. This led to simplified proofs of existing results and to the new result that the Haar basis on $L^1[0, 1]$ has the factorization property.
They also proved that the biparametric Haar basis on $L^1([0, 1]^2)$ and the tensor product of the $\ell^p$ unit vector basis with the Haar basis have the factorization property.

\noindent There is another concept, called primarity of a Banach space which is closely related to the property of factorization. A Banach space $X$ is called {\it primary}, if for every bounded projection $P: X\to X$, either $P(X)$ or $(I - P)(X)$ is isomorphic to $X$. When a Banach space is primary, it follows that either $P$ has a large diagonal or $I - P$ has a large diagonal on a ``large" subsequence of the basis $(e_i)_{i=1}^\infty$ of the Banach space $X$. As observed in \cite[Proposition 2.4]{Prim}, if a Banach space $X$ has the {\it accordion} property, {\it i.e.,} is isomorphic to its $\ell_p$ sum, $1 < p < \infty$, and has a basis with the factorization property, then $X$ is primary. In particular, Enflo proved that $L^p$, $1 \le p < \infty$, is primary, by showing that for every operator $T: L^p \to L^p$ the identity operator factors either through $T$ or $I - T$ \cite{AEO, ME}. Further, Capon in \cite{Cap} proved factorization and primarity theorems for the mixed norm spaces $L^p(L^q)$, $1 < p, q < \infty$. Correspondingly, for the spaces $H^1$ and $BMO$, this type of theorems were proved by M\"uller in \cite{M.}. Further, in \cite{Prim} the authors proved that the space $L^1(L^p)$ is primary for $1 < p < \infty$.

\noindent We recall the definition of a \textit {Haar system space} and preliminary definitions related to it.

\noindent Let the collection of all dyadic intervals in $[0, 1)$ be denoted by $\mathcal{D}$, that is,

\begin{equation}\notag
 \mathcal{D} = \Big\{\Big[{\frac{i - 1}{2^j}}, {\frac{i}{2^j}}\Big): j \in \mathbb{N}\cup \{0\}, 1\le i \le 2^j\Big\}.
\end{equation}

\noindent For $n = 0, 1, 2, \ldots$, let $\mathcal{D}_n = \{J\in \mathcal{D}: |J| = 2^{-n}\}$, and $\mathcal{D}^n = \{J\in \mathcal{D}: |J| \ge 2^{-n}\}$ or $\mathcal{D}^n = \bigcup\limits_{j=0}^n \mathcal{D}_j$, and let $\mathcal{D}^+ = \mathcal{D}\cup \{\emptyset\}$. The enumeration $\imath: \mathcal{D}^+\to \mathbb{N}$ is given by

\begin{equation}\notag
\imath(\emptyset) = 1,\hspace{.5cm}\imath\bigg(\Big[{\frac{i - 1}{2^j}}, {\frac{i}{2^j}}\Big)\bigg) = 2^j +i. 
\end{equation} 
Define $I_j = \imath^{-1}(j)$ for $j\in \mathbb{N}$. Note that this orders $\mathcal{D}^+$ into the sequence $(I_j)$ with the property that $I_1 =\emptyset$, $I_2 = [0, 1]$ and $I_j = I_{2j-1} \cup I_{2j}$, $j\ge 2$.

\begin{defn}\label{ord}
The Haar function $h_I$ with support $I = \Big[{\dfrac{i - 1}{2^j}}, {\dfrac{i}{2^j}}\Big)$ is defined by $h_I = \chi_{I^+} - \chi_{I^-}$, where the dyadic intervals $I^+$ and $I^-$ are given as follows:

\begin{equation}\notag
I^+ = \Big[{\frac{i - 1}{2^j}}, {\frac{2i-1}{2^{j+1}}}\Big), \;\; I^- = \Big[{\frac{2i - 1}{2^{j+1}}}, {\frac{i}{2^j}}\Big).
\end{equation}
\end{defn}

\noindent We also index the Haar functions by natural numbers and put $h_n = h_{I_n}$, where $h_1 = h_\emptyset = \chi_{[0, 1)}$. Using this enumeration the Haar functions form a monotone Schauder basis of $L^p[0, 1]$, $1 \le p < \infty$ \cite[Example (v), page 186]{FHH}.

\noindent Note that for $j\ge 2$ it follows that
\begin{align*}
\suppS(h_{2j-1}) = [ h_j = 1],\;\mathrm{and}\;
\suppS( h_{2j}) = [ h_j = -1].\\
\end{align*}

\noindent Let the notation $\spanS(\cdot)$ denote the linear span.

\begin{defn}\label{i.ii}\cite[Definition 2.12]{Prim}
A {\it Haar system space} $X$ is the completion of $Z = \spanS(\{h_L: L\in \mathcal{D}^+\}) = \spanS(\{\chi_I: I\in \mathcal{D}\})$ under a norm $\|\cdot\|$ that satisfies the following properties.
 
\begin{itemize}
\item[(i)] If $f$ and $g$ are in $Z$ and $|f|$ and $|g|$ have the same distribution then $\|f\| = \|g\|$.

\item[(ii)] $\|\chi_{[0, 1)}\| = 1.$
\end{itemize}
\end{defn}

\noindent The following proposition states important properties for Haar system spaces.

\begin{prop}\label{Prim.} \cite[Proposition 2.13]{Prim}
Let $X$ be a Haar system space.

\begin{itemize}

\item[(a)] For every $f \in Z = \spanS(\{\chi_I: I\in \mathcal{D}\})$ we have $\|f\|_{L_1} \le \|f\|_X \le \|f\|_{L_\infty}$.

\noindent Therefore, $X$ can be naturally identified with a space of measurable functions on $[0, 1]$ and $\overline{Z}^{\|\cdot\|_{L_{\infty}}}\subset X \subset L_1$.

\item[(b)] We identify $Z$ with a subspace of $X^\ast$ via the usual representation:

\begin{equation}\notag
\Psi: Z\to X^\ast, \;\; g\to x_g^\ast, \;\;\mbox{with} \;\; x_g^\ast (f) = \int\limits_0^1 f(x) g(x) \mathrm{d}x, \;\;\mbox{for}\;\; f\in X,
\end{equation}

\noindent and denote the dual norm by $\|\cdot\|_\ast$. Then $(i)$ and $(ii)$ of Definition \ref{i.ii} are also satisfied for $\|\cdot\|_\ast$. Thus the closure of $Z$ with respect to $\|\cdot\|_\ast$ is also a Haar system space.

\item[(c)] The Haar system, in the above introduced linear order, is a monotone Schauder basis of $X$. 

\item[(d)] For a finite union $A$ of elements of $\mathcal{D}$ we put $\mu_A = \|\chi_A\|_X^{-1}$ and $\nu_A = \|\chi_A\|^{-1}_{X^\ast}$. Then, $\mu_A \nu_A = |A|^{-1}$, where $|\cdot|$ for a measurable set $A$ denotes the Lebesgue measure of $A$. In particular, $(\nu_L h_L, \mu_L h_L)_{L \in \mathcal{D}^+}$ is a biorthogonal system in $X^\ast \times X$.

\end{itemize}
\end{prop}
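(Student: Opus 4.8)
The plan is to extract the whole proposition from its single substantive hypothesis, rearrangement invariance (property~(i) of Definition~\ref{i.ii}), by first isolating two elementary facts about $\|\cdot\|=\|\cdot\|_X$ on $Z$. \textbf{Averaging principle.} If $\sigma$ is a measure-preserving bijection of $[0,1)$ permuting the dyadic intervals of a fixed level, then $f\circ\sigma$ is equidistributed with $f$, so $\|f\circ\sigma\|=\|f\|$ by~(i); averaging $f\circ\sigma$ over all $\sigma$ that permute fine-level intervals inside each block of a dyadic partition $\mathcal{P}$ yields the conditional expectation $E_{\mathcal{P}}f$, and convexity of the norm gives $\|E_{\mathcal{P}}f\|\le\|f\|$. \textbf{Sign-flip principle.} For a finite union $S$ of dyadic intervals the operator $\tau_S u=u\chi_{S^c}-u\chi_S$ satisfies $|\tau_S u|=|u|$, hence $\|\tau_S u\|=\|u\|$ by~(i); since $u\chi_S=\tfrac12(u-\tau_S u)$ we get the restriction bound $\|u\chi_S\|\le\|u\|$, from which I would deduce full lattice monotonicity on $Z$: for $0\le g\le h$ constant on level-$m$ intervals, write $g=h\phi$ with $\phi$ valued in $[0,1]$, expand $\phi$ as a convex combination of the $\{0,1\}$-valued vertices of the cube $[0,1]^{2^m}$, so that $g$ is a convex combination of restrictions $h\chi_S$, and conclude $\|g\|\le\|h\|$.

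With these in hand, part~(a) is immediate: $|f|\le\|f\|_{L_\infty}\chi_{[0,1)}$ with monotonicity and~(i)--(ii) gives $\|f\|_X\le\|f\|_{L_\infty}$, while averaging $|f|$ over all permutations of a sufficiently fine dyadic level turns it into the constant $\big(\int_0^1|f|\big)\chi_{[0,1)}$, so $\|f\|_{L_1}=\big\|\big(\int_0^1|f|\big)\chi_{[0,1)}\big\|_X\le\||f|\|_X=\|f\|_X$. The resulting sandwich extends the inclusion of $Z$ to a norm-one map $j\colon X\to L_1$; I would prove $j$ injective in the standard way — it intertwines the dyadic conditional expectations $E_n$, which are contractions on $X$ by the averaging principle and satisfy $E_n\to I_X$ strongly (since $E_nf=f$ eventually for each $f$ in the dense set $Z$), and on the finite-dimensional range of each $E_n$ the norms $\|\cdot\|_X$ and $\|\cdot\|_{L_1}$ are equivalent — so that $X$ is realised as a space of measurable functions with $\overline{Z}^{\,\|\cdot\|_{L_\infty}}\subset X\subset L_1$.

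For part~(b), $\|\chi_{[0,1)}\|_\ast=\sup\{\int_0^1 u:\|u\|_X\le1\}=1$: the bound $\le1$ is $\int_0^1 u\le\|u\|_{L_1}\le\|u\|_X$ from~(a), and it is attained at $u=\chi_{[0,1)}$; and $\|\cdot\|_\ast$ depends only on the distribution of $|f|$ because the block permutations and sign flips above act on $X$ as surjective isometries, so their adjoints, again isometries, send $x^\ast_v$ to $x^\ast_{v\circ\sigma^{-1}}$ and to $x^\ast_{\varepsilon v}$ and thereby realise inside $X^\ast$ every equimeasurable, sign-changed rearrangement of $v$. For part~(c), the point is that with the given enumeration $\spanS\{h_1,\dots,h_n\}$ is exactly the space of functions constant on a partition $\mathcal{P}_n$ of $[0,1)$ into dyadic intervals, so the $n$-th basis projection equals $E_{\mathcal{P}_n}$, a contraction by the averaging principle; combined with the density of $\spanS\{h_L:L\in\mathcal{D}^+\}=Z$ in $X$, this says the Haar system is a monotone Schauder basis.

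For part~(d), $|A|=\int\chi_A^2=x^\ast_{\chi_A}(\chi_A)\le\|\chi_A\|_{X^\ast}\|\chi_A\|_X$ gives $\mu_A\nu_A\le|A|^{-1}$; for the reverse, given $u\in Z$ with $\|u\|_X\le1$, averaging $|u|$ over the block permutations supported in $A$ produces a function equal to a constant $c$ on $A$ and nonnegative off $A$, still of $X$-norm $\le1$, so monotonicity forces $c\|\chi_A\|_X\le1$ and hence $\int_A u\le\int_A|u|=c|A|\le\mu_A|A|$; the supremum over $u$ gives $\|\chi_A\|_{X^\ast}\le\mu_A|A|$, i.e.\ $\mu_A\nu_A\ge|A|^{-1}$, so $\mu_A\nu_A=|A|^{-1}$. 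Since $|h_L|=\chi_{\suppS h_L}$, property~(i) and its $\|\cdot\|_\ast$-analogue from~(b) give $\|h_L\|_X=\mu_L^{-1}$ and $\|h_L\|_{X^\ast}=\nu_L^{-1}$, and combining the orthogonality of the Haar functions with $\mu_L\nu_L=|\suppS h_L|^{-1}$ shows $(\nu_L h_L,\mu_L h_L)_{L\in\mathcal{D}^+}$ is biorthogonal in $X^\ast\times X$. I expect the main obstacle to be the first paragraph — upgrading bare equimeasurable invariance to lattice monotonicity and to contractivity of the dyadic conditional expectations; after that, (a)--(d) are essentially formal, with the only real care needed in the density/finite-dimensional step that pins $X$ down as a genuine function space inside $L_1$.
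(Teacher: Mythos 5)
The paper does not prove this proposition — it is quoted verbatim from \cite[Proposition 2.13]{Prim} — so there is no in-paper argument to compare against. Your proof is correct and follows essentially the same route as the cited source: extracting lattice monotonicity from sign-flips via $u\chi_S=\tfrac12(u-\tau_S u)$, and contractivity of the dyadic conditional expectations by averaging $f\circ\sigma$ over interval permutations, after which (a)--(d) follow as you describe; the details you leave implicit (that the Haar partial-sum projections in this order are exactly the $E_{\mathcal{P}_n}$, and the density argument identifying $X$ inside $L_1$) are routine and correctly indicated.
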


\begin{defn}\cite{Prim}
A (non-zero) Banach space $X$ of measurable scalar functions on $[0, 1)$ is called rearrangement invariant if the following conditions hold true.

\begin{itemize}
\item[(i)]
Whenever $f\in X$ and $g$ is a measurable function with $|g|\le|f|$ a.e. then $g\in X$ and $\|g\|_X\le\|f\|_X$.

\item[(ii)]
If $f$ and $g$ are in $X$ and they have the same distribution then $\|f\|_X = \|g\|_X$.
\end{itemize}
\end{defn}

\noindent By \cite[Proposition 2.c.1]{LT}, every separable rearrangement invariant space $X$ is a Haar system space, and by \cite{EH} (see also \cite[Proposition 2.c.5]{LT}) the Haar basis is unconditional if and only if $X$ does not contain uniformly isomorphic copies of $\ell_1^n$ and $\ell_\infty^n$, for all $n$, on disjoint vectors with the same distribution function. 

\vspace{.1 in}

\noindent In our paper, we study the factorization property of the Haar system in a Haar system space.
In Section 2, we prove some important properties of Haar system spaces which we need later. 
In Section 3, we prove the following. We assume that the sequence of Rademacher functions in a Haar system space is weakly null. Then we show that for every operator $T$ with a large diagonal there exists a diagonal operator with a large diagonal which approximately factors through $T$. Finally, in Section 4, we assume that the Haar basis is unconditional and we also obtain the factorization of the identity on a Haar system space through the bounded linear operator $T$.

%%%%%%%%%%%
%%%%%%%%%%%
%%%%%%%%%%%

\section{Faithful Haar System}

\vspace{.2 in}

\begin{defn}\label{FHS}
\noindent A {\it {faithful Haar system}} is a sequence $(\tilde h_j)_{j=1}^\infty$ of $\{0, \pm 1\}$ valued functions with $\Tilde{h}_1 = h_1 = 1_{[0,1]}$ and $(\tilde h_j)_{j=2}^\infty$ is in $\spanS(h_I: I \in \mathcal{D})$ so that:

\begin{itemize}
\item[(i)]

$\suppS(\tilde h_2) = [0,1]$,

\item[(ii)] If $j\ge 3$ and $j = 2k - 1$, then $\suppS(\tilde h_j) = [\tilde h_k = 1]$, and if  
$j\ge 4$ and $j = 2k$, then $\suppS(\tilde h_j) = [\tilde h_k = -1]$. 
\end{itemize}

\end{defn}

\begin{rem}\label{aftFHS}
Since $(\tilde h_j)_{j=2}^\infty \subset \spanS(h_I: I \in \mathcal{D})$ it follows for every $j\ge2$ that $\int\limits_0^1 \tilde h_j(t)dt = 0$ and since $\tilde h_j$ is $\{0, \pm 1\}$ valued, it follows that
\begin{equation}\label{cond}
|[\tilde h_j = 1]| = |[\tilde h_j = -1]| =\dfrac{1}{2}|\suppS (\tilde h_j)|.
\end{equation}
From $(i)$ and $(ii)$ of Definition \ref{FHS} it follows by induction for $j\ge 2$ that the measure of the support of $\tilde h_j$ is equal to the measure of $I_j$. It also follows for any $j\in \mathbb{N}$ that any linear combination $\sum\limits_{i = 1}^j \alpha_i \tilde h_i$ is constant on the support of $\tilde h_{j+1}$.
\end{rem}

\noindent From this observation we deduce the following Proposition.

\begin{prop}\label{Inducc}
Let $(\Tilde{h}_i)_{i=1}^\infty$ be a faithful Haar system. Let $f_j$ and $g_j$ be measurable functions in the rearrangement invariant space $F$, defined as

\begin{equation}
    f_j = \sum_{i=1}^j \xi_i h_i, \hspace{1cm}
    g_j = \sum_{i=1}^j \xi_i \Tilde{h}_i.
\end{equation}

\noindent Then $f_j$ and $g_j$ have the same distribution.
\end{prop}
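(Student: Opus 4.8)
The plan is to prove the assertion by induction on $j$, but carrying a hypothesis strictly stronger than mere equidistribution. A bare induction on ``$f_j$ and $g_j$ are equidistributed'' fails because passing from $j$ to $j+1$ changes $f_j$ into $f_j+\xi_{j+1}h_{j+1}$, whose new values appear precisely on $\suppS(h_{j+1})$ (resp.\ $g_j$ into $g_j+\xi_{j+1}\tilde h_{j+1}$, with new values on $\suppS(\tilde h_{j+1})$), where $f_j$ (resp.\ $g_j$) is constant by Remark~\ref{aftFHS}; for the two updated distributions to agree one needs those two constants to be equal, and equidistribution alone does not force this. So instead I would prove, by induction on $j$, that there is a measure preserving bijection $\psi_j\colon[0,1)\to[0,1)$ (modulo null sets, with measurable inverse) with $g_j=f_j\circ\psi_j$ a.e.\ and such that, for every $k$ with $2\le k\le j$, $\psi_j\big([\tilde h_k=1]\big)=[h_k=1]$ and $\psi_j\big([\tilde h_k=-1]\big)=[h_k=-1]$; in words, $\psi_j$ carries the dyadic decomposition generated by $\tilde h_1,\dots,\tilde h_j$ onto the one generated by $h_1,\dots,h_j$, respecting the underlying binary tree. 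Since composition with a measure preserving bijection preserves distributions, Proposition~\ref{Inducc} is immediate from this.

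The base case $j=1$ is trivial ($\tilde h_1=h_1$ gives $f_1=g_1$, take $\psi_1=\mathrm{id}$), and the step $j=1\to j=2$ needs only $\suppS(h_2)=\suppS(\tilde h_2)=[0,1)$, which is Definition~\ref{FHS}(i). For the step with $j+1\ge3$, write $j+1=2k-1$ or $j+1=2k$; then $2\le k\le j$ and, by Definition~\ref{FHS}(ii) together with the enumeration of $\mathcal D^+$, $\suppS(\tilde h_{j+1})=[\tilde h_k=\varepsilon]$ and $\suppS(h_{j+1})=[h_k=\varepsilon]$ for the appropriate sign $\varepsilon$, so the inductive tree condition gives $\psi_j\big(\suppS(\tilde h_{j+1})\big)=\suppS(h_{j+1})$. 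Because $g_j=f_j\circ\psi_j$ and each of $f_j,g_j$ is constant on the corresponding support (Remark~\ref{aftFHS}), those two constants coincide; call the common value $c$. By Remark~\ref{aftFHS} and \eqref{cond}, the sets $[h_{j+1}=\pm1]$ and $[\tilde h_{j+1}=\pm1]$ all have measure $\tfrac12|I_{j+1}|$, so one can choose a measure preserving bijection of $\suppS(\tilde h_{j+1})$ onto $\suppS(h_{j+1})$ sending $[\tilde h_{j+1}=1]$ onto $[h_{j+1}=1]$ and $[\tilde h_{j+1}=-1]$ onto $[h_{j+1}=-1]$; splicing it in place of $\psi_j$ on $\suppS(\tilde h_{j+1})$ (and keeping $\psi_j$ elsewhere) defines $\psi_{j+1}$. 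Then $g_{j+1}=f_{j+1}\circ\psi_{j+1}$: off the two supports both sides equal $g_j=f_j\circ\psi_j$, and on $[\tilde h_{j+1}=\pm1]$ one has $g_{j+1}=c\pm\xi_{j+1}$, matching $f_{j+1}=c\pm\xi_{j+1}$ on $[h_{j+1}=\pm1]$. Finally the tree condition for $\psi_{j+1}$ holds: for $k'=j+1$ it is built in, and for $2\le k'\le j$ it is inherited because the splice only redistributes mass inside $\suppS(\tilde h_{j+1})$, a set that lies wholly on one side of $[\tilde h_{k'}=\pm1]$ and is mapped onto the correspondingly placed subset of $[h_{k'}=\pm1]$.

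I expect the identification of the two constants $c$ to be the real content of the proof: it is exactly here that equidistribution is insufficient and that the faithfulness of $(\tilde h_i)$ in Definition~\ref{FHS} gets used, through the tree-compatibility clause of $\psi_j$; accordingly the delicate point is verifying that this clause propagates through the splicing. (Equivalently one can avoid bijections altogether and compute the common constant directly as $\xi_1$ plus a signed sum of those $\xi_i$, $2\le i\le j$, with $I_i\supseteq I_{j+1}$, the signs being determined by which half of $I_i$ contains $I_{j+1}$; by Definition~\ref{FHS} this is literally the same expression for the faithful system, but the bijection formulation is cleaner to iterate.) The remaining ingredients — existence of a measure preserving bijection between subsets of $[0,1)$ of equal Lebesgue measure (standard, since $[0,1)$ with Lebesgue measure is a standard probability space), and the measure/constancy bookkeeping — are routine.
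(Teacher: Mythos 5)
Your proof is correct. The paper offers no written argument for this proposition --- it is deduced directly from Remark~\ref{aftFHS} with the induction left implicit --- and your strengthened induction hypothesis (a measure preserving bijection of $[0,1)$ carrying the dyadic tree generated by $\tilde h_1,\dots,\tilde h_j$ onto that of $h_1,\dots,h_j$, which forces the two constants on $\suppS(\tilde h_{j+1})$ and $\suppS(h_{j+1})$ to coincide) is exactly the bookkeeping needed to make that deduction rigorous, so you are following the intended route, just with the genuinely necessary detail filled in.
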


\noindent Proposition \ref{Inducc} implies the following corollary.

\begin{cor}\label{Cor}
Let $(\Tilde{h}_i)_{i=1}^\infty$ be a faithful Haar system. Then $(h_i)_{i=1}^\infty$ and $(\Tilde{h}_i)_{i=1}^\infty$ are isometrically equivalent in a Haar system space X.
\end{cor}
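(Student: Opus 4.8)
The plan is to deduce the corollary directly from Proposition \ref{Inducc} together with property (i) in the definition of a Haar system space (Definition \ref{i.ii}). Recall that two basic sequences $(x_i)$ and $(y_i)$ are \emph{isometrically equivalent} exactly when $\big\|\sum_{i=1}^n \xi_i x_i\big\| = \big\|\sum_{i=1}^n \xi_i y_i\big\|$ for every $n\in\mathbb{N}$ and all scalars $\xi_1,\dots,\xi_n$. So it is enough to establish $\|f_n\|_X = \|g_n\|_X$ for the partial sums $f_n = \sum_{i=1}^n \xi_i h_i$ and $g_n = \sum_{i=1}^n \xi_i \tilde h_i$, and then to extend the resulting isometry to the closed spans.

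First I would check that $f_n$ and $g_n$ both lie in $Z = \spanS(\{h_I : I\in\mathcal{D}\})$: this is immediate for $f_n$, and for $g_n$ it follows from Definition \ref{FHS}, since $\tilde h_1 = h_1$ and $(\tilde h_j)_{j\ge 2}\subset \spanS(h_I : I\in\mathcal{D})$. Next, Proposition \ref{Inducc} gives that $f_n$ and $g_n$ have the same distribution as measurable functions on $[0,1)$ — and this conclusion concerns only the functions, not the ambient norm — hence $|f_n|$ and $|g_n|$ have the same distribution as well. Applying property (i) of Definition \ref{i.ii} to the pair $f_n, g_n\in Z$ then yields $\|f_n\|_X = \|g_n\|_X$.

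Since $n$ and the scalars were arbitrary, the linear map $U\colon \spanS(h_i)\to \spanS(\tilde h_i)$ with $Uh_i = \tilde h_i$ is a bijective isometry between the two linear spans. By Proposition \ref{Prim.}(c) the Haar system $(h_i)$ is a monotone Schauder basis of $X$, so $\spanS(h_i)$ is dense in $X$; therefore $U$ extends uniquely to an isometric embedding $\bar U\colon X\to X$, and $(\tilde h_i)$, being the image under $\bar U$ of the Schauder basis $(h_i)$, is a basic sequence isometrically equivalent to $(h_i)$, with $\overline{\spanS}(\tilde h_i) = \bar U(X)$.

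I do not expect a genuine obstacle here: the substantive step has already been isolated in Proposition \ref{Inducc}, and what remains is routine bookkeeping — verifying $g_n\in Z$ so that property (i) is applicable, passing from equidistribution of $f_n, g_n$ to that of $|f_n|, |g_n|$, and invoking density of $\spanS(h_i)$ in $X$ to upgrade the isometry on finite linear combinations to an isometry defined on all of $X$.
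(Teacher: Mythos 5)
Your proposal is correct and follows exactly the route the paper intends: the paper derives the corollary directly from Proposition \ref{Inducc} combined with property (i) of Definition \ref{i.ii}, and your write-up simply supplies the routine details (membership of $g_n$ in $Z$, passage from equidistribution of $f_n,g_n$ to that of $|f_n|,|g_n|$, and the density argument extending the isometry). No gaps.
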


\noindent We notice that the sequence $\Big({\dfrac{\Tilde{h}_j}{\|h_j\|_F}}\Big)_{j=1}^\infty$ is a block basis for $\Big({\dfrac{h_j}{\|h_j\|_F}}\Big)_{j=1}^\infty$, so that its biorthogonals are $\Big({\dfrac{\Tilde{h}_j}{\|h_j\|_{F^\ast}}}\Big)_{j=1}^\infty$ and $\Big\langle {\dfrac{\Tilde{h}_j}{\|h_j\|_F}}, {\dfrac{\Tilde{h}_j}{\|h_j\|_{F^\ast}}}\Big\rangle = 1$ for all natural $j$.

\vspace{.1 in}

\begin{prop}\cite[Proposition 3.7]{StR}
Assume that $X$ is a Banach space with a basis $(e_n)$, whose basis constant is $\lambda \ge 1$ and biorthogonal functionals $(e_n^\ast)$. Let $(b_n)$ and $(b_n^\ast)$ be block bases of $(e_n)$ and $e_n^\ast$, respectively, so that $b^\ast_m(b_n) = \delta_{m,n}$, for $m, n \in \mathbb{N}$, and so that for some $C\ge 1$ it follows that
$$
\Big\|\sum\limits_{j=1}^\infty \xi_j b_j\Big\|_X\le \sqrt{C}\Big\|\sum\limits_{j=1}^\infty \xi_j e_j\Big\|_X,\;\;\mathrm{and}\;\;\Big\|\sum\limits_{j=1}^\infty \xi_j b^\ast_j\Big\|_{X^\ast}\le \sqrt{C}\Big\|\sum\limits_{j=1}^\infty \xi_j e^\ast_j\Big\|_{X^\ast},
$$
for all $(\xi_j)\in c_{00}$.

\noindent Then $Y=\overline{\spanS(b_j:j\in \mathbb{N})}$ is a complemented subspace of $X$ and
$$
P:X\to Y, \;\; x\mapsto \sum\limits_{n=1}^\infty b_n^\ast(x)b_n
$$
is well defined and a bounded projection onto $Y$ with $\|P\|\le \lambda C$. Moreover, if $(e_n)$ is shrinking, then $\|P\|\le C$.
\end{prop}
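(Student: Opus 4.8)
The plan is to exhibit the projection $P$ as a composition of two bounded operators on $X$, one built from the block basis $(b_n)$ and one built from the dual block basis $(b_n^\ast)$, and to control their norms by the two hypotheses. Concretely, I would define $B \colon X \to X$ by setting $B e_n = b_n$ and extending linearly; the first inequality says exactly that $\|B(\sum_{n} \xi_n e_n)\|_X \le \sqrt{C}\,\|\sum_n \xi_n e_n\|_X$ on $\spanS(e_n)$, so $B$ extends to a bounded operator with $\|B\| \le \sqrt{C}$. I would then define $U \colon X \to X$ by $U x = \sum_n b_n^\ast(x)\, e_n$ and show that $P = B \circ U$; granting for a moment that $U$ is a well-defined bounded operator, the continuity of $B$ gives $P x = B(U x) = \sum_n b_n^\ast(x)\, b_n$, which simultaneously proves that the defining series for $P$ converges and that $\|P\| \le \|B\|\,\|U\| \le \sqrt{C}\,\|U\|$.

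The heart of the argument is therefore the boundedness of $U$, and this is where the dual hypothesis enters. I would first bound the partial-sum operators $U^{(N)} x = \sum_{n=1}^N b_n^\ast(x)\, e_n$ by duality: for a functional $\phi \in X^\ast$ with $\|\phi\| \le 1$, set $\Phi = \sum_{n=1}^N \phi(e_n)\, b_n^\ast \in X^\ast$, so that $\phi\big(U^{(N)} x\big) = \Phi(x)$ and hence $\big|\phi(U^{(N)} x)\big| \le \|\Phi\|_{X^\ast}\,\|x\|_X$. Applying the second inequality gives $\|\Phi\|_{X^\ast} \le \sqrt{C}\,\|\sum_{n=1}^N \phi(e_n)\, e_n^\ast\|_{X^\ast}$, and I would identify the vector on the right as the image $P_N^\ast \phi$ of $\phi$ under the adjoint of the canonical basis projection $P_N$, whose norm is at most the basis constant $\lambda$. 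Taking the supremum over $\phi$ yields $\|U^{(N)}\| \le \sqrt{C}\,\lambda$ uniformly in $N$. To pass from the partial sums to $U$ itself I would check convergence on the dense subspace $\spanS(e_k)$: since $(b_n^\ast)$ is a block basis of $(e_n^\ast)$ with disjoint successive supports, for each fixed $k$ at most one $n$ has $b_n^\ast(e_k) \neq 0$, so $U e_k$ is a finite sum; uniform boundedness of the $U^{(N)}$ then extends $U$ to all of $X$ with $\|U\| \le \sqrt{C}\,\lambda$. This already gives $\|P\| \le C\lambda$.

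For the shrinking case I would improve the bound on $U$ to $\|U\| \le \sqrt{C}$, eliminating the factor $\lambda$. Here $(e_n^\ast)$ is a basis of $X^\ast$, so I would compute the adjoint directly: for $\phi \in X^\ast$ one has $U^\ast \phi = \sum_n \phi(e_n)\, b_n^\ast$, and since $\phi = \sum_n \phi(e_n)\, e_n^\ast$ converges in $X^\ast$ with $\|\sum_{n=1}^N \phi(e_n)\, e_n^\ast\|_{X^\ast} \to \|\phi\|_{X^\ast}$, the second inequality applied to the partial sums and passed to the limit gives $\|U^\ast \phi\|_{X^\ast} \le \sqrt{C}\,\|\phi\|_{X^\ast}$, whence $\|U\| = \|U^\ast\| \le \sqrt{C}$ and $\|P\| \le C$.

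Finally I would verify the projection properties. The inclusion $P(X) \subseteq Y$ is immediate because each partial sum lies in $\spanS(b_n)$ and $Y$ is closed, and the biorthogonality $b_n^\ast(b_m) = \delta_{n,m}$ gives $P b_m = b_m$, so $P$ restricts to the identity on the dense set $\spanS(b_n)$ and hence on $Y$; together these yield $P(X) = Y$ and $P^2 = P$, so $Y$ is complemented. I expect the main obstacle to be the norm bookkeeping in the boundedness of $U$ — in particular, correctly routing the dual hypothesis through the adjoint basis projection $P_N^\ast$ to produce the factor $\lambda$, and seeing why shrinkingness lets one replace $P_N^\ast$ by the full identity on $X^\ast$ and thereby drop $\lambda$.
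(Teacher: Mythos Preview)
The paper does not include a proof of this proposition; it is quoted verbatim from \cite[Proposition~3.7]{StR} and used as a black box to deduce Proposition~\ref{L3.7}. There is therefore no ``paper's own proof'' to compare against.

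That said, your argument is correct and well organized. The factorization $P = B\circ U$ with $B e_n = b_n$ and $U x = \sum_n b_n^\ast(x)\,e_n$ is the natural approach, and your duality computation for $U^{(N)}$ is accurate: identifying $\sum_{n=1}^N \phi(e_n)\,e_n^\ast$ with $P_N^\ast\phi$ is exactly what produces the basis-constant factor $\lambda$, and your passage from the uniformly bounded partial sums to $U$ via convergence on $\spanS(e_k)$ is legitimate because each $e_k$ lies in the support of at most one $b_n^\ast$. In the shrinking case your observation that $\phi = \sum_n \phi(e_n)\,e_n^\ast$ converges in $X^\ast$ lets you apply the second hypothesis to the full series rather than to $P_N^\ast\phi$, which is precisely why $\lambda$ disappears. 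The verification that $P$ is a projection onto $Y$ is routine and correct. One minor stylistic point: when you pass to the limit for $\|U^\ast\phi\|$ in the shrinking case, you implicitly use that $\sum_n \phi(e_n)\,b_n^\ast$ converges in $X^\ast$; this follows from the Cauchy estimate $\big\|\sum_{n=M}^N \phi(e_n)\,b_n^\ast\big\| \le \sqrt{C}\,\big\|\sum_{n=M}^N \phi(e_n)\,e_n^\ast\big\|\to 0$, which you might make explicit.
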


\vspace{.1 in}

\noindent Using \cite[Proposition 3.7]{StR}, we deduce the following proposition.

\begin{prop}\label{L3.7}
Assume that $F$ is a Haar system space and $(\Tilde{h}_j)_{j=1}^\infty$ is a faithful Haar system in $F$.
Then the space $G = \overline{\spanS(\Tilde{h}_j: j\in \mathbb{N})}$ is a complemented subspace of $F$ and $P: F\to G$, determined by

\begin{equation}
P(f) = \sum\limits_{j=1}^\infty\Big\langle f, {\dfrac{\Tilde{h}_j}{\|h_j\|_{F^\ast}}}\Big\rangle {\dfrac{\Tilde{h}_j}{\|h_j\|_F}}, \; f\in F,
\end{equation}

\noindent is a well defined bounded projection onto $G$ with $\|P\|=1$.
\end{prop}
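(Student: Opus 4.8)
The plan is to apply the cited Proposition \cite[Proposition 3.7]{StR} directly to the Haar system space $F$, with the normalized Haar system playing the role of the basis $(e_n)$ and the normalized faithful Haar system playing the role of the block basis $(b_n)$. First I would recall that by Proposition \ref{Prim.}(c) the Haar system is a \emph{monotone} Schauder basis of $F$, so its basis constant is $\lambda = 1$; set $e_j = h_j/\|h_j\|_F$ and $e_j^\ast = \|h_j\|_F\, \mu_{I_j}\, h_j = h_j/\|h_j\|_{F^\ast}$, using Proposition \ref{Prim.}(d) to identify $\|h_j\|_{F^\ast}$ with the appropriate normalization (so that $\langle e_j, e_j^\ast\rangle = 1$). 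Then I would set $b_j = \tilde h_j/\|h_j\|_F$ and $b_j^\ast = \tilde h_j/\|h_j\|_{F^\ast}$; these are block bases of $(e_n)$ and $(e_n^\ast)$ respectively because, by the defining support conditions in Definition \ref{FHS} together with Remark \ref{aftFHS}, each $\tilde h_j$ lies in $\spanS(h_I : I \in \mathcal{D})$ and the supports are nested exactly as the Haar intervals are, so $\tilde h_j$ is in fact a finite linear combination of Haar functions whose indices are "below" those appearing in $\tilde h_{j+1}$; the biorthogonality $b_m^\ast(b_n) = \delta_{m,n}$ was already observed in the paragraph preceding the proposition.

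The key point is then to verify the two norm inequalities with $C = 1$, i.e.\ that
\begin{equation}\notag
\Big\|\sum_j \xi_j b_j\Big\|_F \le \Big\|\sum_j \xi_j e_j\Big\|_F \quad\text{and}\quad \Big\|\sum_j \xi_j b_j^\ast\Big\|_{F^\ast} \le \Big\|\sum_j \xi_j e_j^\ast\Big\|_{F^\ast}
\end{equation}
for all $(\xi_j) \in c_{00}$. But by Corollary \ref{Cor}, $(h_i)$ and $(\tilde h_i)$ are \emph{isometrically equivalent} in any Haar system space, so in fact both inequalities hold with equality: $\sum_j \xi_j h_j$ and $\sum_j \xi_j \tilde h_j$ have the same $F$-norm (hence rescaling the coefficients $\xi_j \mapsto \xi_j/\|h_j\|_F$ gives the first equality), and since by Proposition \ref{Prim.}(b) the dual norm $\|\cdot\|_{F^\ast}$ is again the norm of a Haar system space, Corollary \ref{Cor} applies equally to it, giving the second equality. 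Thus the hypotheses of \cite[Proposition 3.7]{StR} are met with $\lambda = C = 1$.

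Invoking the cited proposition, $G = \overline{\spanS(\tilde h_j : j \in \mathbb{N})} = \overline{\spanS(b_j : j \in \mathbb{N})}$ is a complemented subspace of $F$ and the map
\begin{equation}\notag
P : F \to G, \qquad x \mapsto \sum_{n=1}^\infty b_n^\ast(x)\, b_n = \sum_{j=1}^\infty \Big\langle f, \frac{\tilde h_j}{\|h_j\|_{F^\ast}}\Big\rangle \frac{\tilde h_j}{\|h_j\|_F}
\end{equation}
is a well-defined bounded projection onto $G$ with $\|P\| \le \lambda C = 1$. Since $P$ is a projection onto a nontrivial space it satisfies $\|P\| \ge 1$, hence $\|P\| = 1$, which is exactly the assertion. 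I do not anticipate a genuine obstacle here: the entire content is in checking that the faithful Haar system is a legitimate block basis of the Haar system and its biorthogonal block basis of the dual Haar system, and that the constant $C$ can be taken to be $1$ — the latter being immediate from the isometric equivalence of Corollary \ref{Cor} applied both to $F$ and to its dual Haar system space $F^\ast$. The only mild care needed is the bookkeeping identifying $\|h_j\|_{F^\ast}$ with $\nu_{I_j}^{-1}\,$-type quantities via Proposition \ref{Prim.}(d) so that the biorthogonal relations are correctly normalized.
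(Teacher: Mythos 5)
Your argument is correct and is precisely the route the paper intends (the paper gives no details beyond the sentence ``Using \cite[Proposition 3.7]{StR}, we deduce the following proposition'' and the preceding paragraph on biorthogonality): monotonicity of the Haar basis gives $\lambda=1$, and Corollary \ref{Cor} applied to $F$ and, via Proposition \ref{Prim.}(b), to the dual Haar system space gives $C=1$, whence $\|P\|\le\lambda C=1$ and equality holds because $P$ is a nonzero projection. The only slip is cosmetic: by Proposition \ref{Prim.}(d) one has $h_j/\|h_j\|_{F^\ast}=\nu_{I_j}h_j=|I_j|^{-1}\|h_j\|_F\,h_j$, whereas $\mu_{I_j}\|h_j\|_F\,h_j$ would just equal $h_j$; this does not affect the argument since you use the correct expression $h_j/\|h_j\|_{F^\ast}$ throughout.
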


%%%%%%%%%%%
%%%%%%%%%%%
%%%%%%%%%%%

\section{Approximate factorization of a diagonal operator with a large diagonal through a bounded operator with a large diagonal}

\noindent The proof of Theorem \ref{commut} of this section uses the following result known as ``Small Perturbation Lemma".

\begin{thm}\cite[Theorem 4.23]{FHH}
Let $(x_n)$ be a basic sequence in a Banach space $X$, and let $(x_n^\ast)$ be the coordinate functionals $($they are elements of $\overline{\spanS(x_j: j\in \mathbb{N})}^\ast)$ and assume that $(y_n)$ is a sequence in $X$ such that

\begin{equation}
c = \sum\limits_{n=1}^\infty \|x_n - y_n\|\cdot \|x_n^\ast\| < 1.
\end{equation}

\noindent Then $(y_n)$ is also basic in $X$ and isomorphically equivalent to $(x_n)$, and if $\overline{\spanS(x_j: j\in \mathbb{N})}$ is complemented in $X$, then so is $\overline{\spanS (y_j: j\in \mathbb{N})}$.
\end{thm}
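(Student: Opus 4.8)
The plan is to produce a single automorphism $\Phi$ of $X$ that sends each $x_n$ to $y_n$; once we have it, every assertion of the theorem follows by transport of structure. The first step is a bookkeeping one: each coordinate functional $x_n^\ast$ a priori lives only on $\overline{\spanS(x_j:j\in\mathbb N)}$, so I would replace it by a Hahn--Banach extension to all of $X$, still denoted $x_n^\ast$, with the same norm, so that the quantity $c=\sum_{n=1}^\infty\|x_n-y_n\|\,\|x_n^\ast\|<1$ is unchanged. Writing $E=\overline{\spanS(x_j:j\in\mathbb N)}$ and $F=\overline{\spanS(y_j:j\in\mathbb N)}$, define
\[
\Phi\colon X\to X,\qquad \Phi(x)=x+\sum_{n=1}^\infty x_n^\ast(x)\,(y_n-x_n).
\]
The series converges absolutely and uniformly on bounded sets, since $\sum_{n\ge N}|x_n^\ast(x)|\,\|y_n-x_n\|\le \|x\|\sum_{n\ge N}\|x_n^\ast\|\,\|y_n-x_n\|\to 0$; hence $\Phi$ is a well-defined bounded linear operator with $\|\Phi-I_X\|\le c<1$.

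The second step is to invert $\Phi$. Because $\|\Phi-I_X\|<1$, the Neumann series gives that $\Phi$ is invertible with $\|\Phi^{-1}\|\le (1-c)^{-1}$, so $\Phi$ is an automorphism of $X$ with $\|\Phi\|\le 1+c$. Evaluating on basis vectors and using $x_m^\ast(x_n)=\delta_{m,n}$ yields $\Phi(x_n)=x_n+(y_n-x_n)=y_n$ for all $n$. By continuity $\Phi(E)\subseteq F$, while $\Phi^{-1}(y_n)=x_n$ gives $\Phi^{-1}(F)\subseteq E$, so in fact $\Phi$ restricts to an isomorphism of $E$ onto $F$. Consequently $(y_n)=(\Phi x_n)$ is a basic sequence, isomorphically equivalent to $(x_n)$ with equivalence constants controlled by $(1-c)^{\pm 1}$.

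The third step handles complementation. If $P\colon X\to X$ is a bounded projection with range $E$, then $R:=\Phi P\Phi^{-1}$ is bounded and idempotent, since $R^2=\Phi P^2\Phi^{-1}=\Phi P\Phi^{-1}=R$, and its range is $\Phi(P(X))=\Phi(E)=F$. Thus $\overline{\spanS(y_j:j\in\mathbb N)}$ is complemented in $X$, completing the proof.

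I do not expect a genuine obstacle here; the result is essentially a Neumann-series argument. The only point that needs care is the first step: the coordinate functionals are given only on $\overline{\spanS(x_j:j\in\mathbb N)}$, so one must extend them to $X$ before forming $\Phi$, and one should note that a norm-preserving extension leaves the perturbation constant $c$ untouched, so that the hypothesis $c<1$ is still available when applying the Neumann series.
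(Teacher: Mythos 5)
Your argument is correct and is precisely the standard proof of the principle of small perturbations (the Neumann-series argument given in the cited reference [FHH, Theorem 4.23]); the paper itself only quotes this result without proof, so there is nothing to compare against beyond the textbook source. The one delicate point — extending the coordinate functionals from $\overline{\spanS(x_j: j\in \mathbb{N})}$ to $X$ by Hahn--Banach before forming $\Phi$ — is handled correctly, and the rest (invertibility of $\Phi$, $\Phi(x_n)=y_n$, and conjugating a projection by $\Phi$) is sound.
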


\noindent We note that the Rademacher sequence $(r_n)_{n\in \mathbb{N}\cup\{0, \emptyset\}} = \Big(\sum\limits_{I\in\mathcal{D}_n} h_I\Big)_{n\in \mathbb{N}\cup\{0, \emptyset\}}$ is a block basis of the Haar basis $(h_I)_{I\in\mathcal{D}^+}$, where we let $r_\emptyset = 1$. From now on we assume that $F$ is a Haar system space with the following property:

\vspace{.1 in}

\noindent $(\star)$ the Rademacher functions $(r_n)_n$ are weakly null.

\vspace{.1 in}

\noindent This condition is equivalent with the condition that the sequence $(r_n)_n$ is not equivalent to the $\ell_1$ unit vector basis \cite[Remark 2.15]{Prim}.
For $n\in \mathbb{N}$ and $\theta_n = \Big(\theta_n(I)\Big)_{I\in \mathcal{D}_n} \subset \{\pm 1\}$, we define $r^{\theta_n}_n = \sum\limits_{I\in \mathcal{D}_n} \theta_n(I) h_I$.
 
\begin{lem}\label{mult}
Let $A$ be a finite union of elements in $\mathcal D_k$, with $n>k$, so that $A$ is in $\mathcal D^+$.
Then it follows that

\begin{equation}
1_A r^{\theta_n}_n\overset{w}{\longrightarrow} 0, \;\;\; \mbox{as} \;\;\; n\to \infty.
\end{equation}*
\end{lem}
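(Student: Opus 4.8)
The plan is to reduce the weak convergence $1_A r_n^{\theta_n} \overset{w}{\to} 0$ to the known fact that the unsigned Rademacher sequence $(r_n)_n$ is weakly null (hypothesis $(\star)$), using two observations: first, that signs do not affect distributions in a rearrangement invariant space, and second, that multiplication by $1_A$ for a fixed dyadic set $A$ is (up to a harmless restriction of indices) compatible with the block structure. Concretely, I would first fix $n > k$ and note that on each dyadic interval $J \in \mathcal D_k$ with $J \subseteq A$, the function $r_n^{\theta_n}$ restricted to $J$ is, up to a choice of signs, just the Rademacher-type function at level $n-k$ on $J$; more to the point, since $A$ is a union of intervals from $\mathcal D_k$ and $n > k$, the function $1_A r_n^{\theta_n}$ is itself a $\{0,\pm 1\}$-valued function supported on $A$ which is mean-zero on each such $J$. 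The key structural claim is then that $1_A r_n^{\theta_n}$ has the same distribution as (and in fact can be realized as a faithful-Haar-type block of) a suitable tail-Rademacher function, or at least that it lies in a weakly null family.

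The cleanest route is via duality and the complemented-subspace machinery already set up. Since $F$ is a Haar system space, $F^\ast$ contains $Z$ densely (Proposition \ref{Prim.}(b)), so to prove $1_A r_n^{\theta_n} \overset{w}{\to} 0$ it suffices to test against $g \in Z = \spanS(\chi_I : I \in \mathcal D)$, i.e. against a finite linear combination of characteristic functions of dyadic intervals, and to verify that $\sup_n \|1_A r_n^{\theta_n}\|_F < \infty$. The norm bound is immediate because $|1_A r_n^{\theta_n}| \le 1_{[0,1]}$ pointwise, so $\|1_A r_n^{\theta_n}\|_F \le \|\chi_{[0,1)}\|_F = 1$ by the lattice property in Proposition \ref{Prim.}(a) (or directly from the r.i.\ axioms). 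For the testing: fix a dyadic interval $I$; for $n$ large enough that $2^{-n} < |I|$ and $2^{-n} < |A$-generating level$|$, the integral $\int_0^1 1_A(t) r_n^{\theta_n}(t) \chi_I(t)\,dt = \int_{A \cap I} r_n^{\theta_n}(t)\,dt$ is a sum of integrals of $r_n^{\theta_n}$ over those dyadic intervals of level $k$ contained in $A \cap I$, each of which vanishes since $r_n^{\theta_n}$ is mean-zero on every dyadic interval strictly coarser than its own level. Hence the pairing is eventually $0$, and by density plus the uniform norm bound, $1_A r_n^{\theta_n} \overset{w}{\to} 0$.

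Wait — this argument as stated proves weak convergence to $0$ \emph{without} even using $(\star)$, which would be suspicious; the subtlety is that the functionals in $F^\ast$ need not all be represented by $Z$-functions, only a \emph{dense} subspace of $F^\ast$ is. So the honest version must combine the uniform boundedness $\sup_n\|1_A r_n^{\theta_n}\|_F \le 1$ with weak-star density and a standard $3\varepsilon$-argument; this is where hypothesis $(\star)$ genuinely enters, because the uniform bound together with pointwise-on-a-dense-set convergence gives weak convergence only if the sequence is already known to be \emph{weakly relatively compact} — and that is exactly the content of $(r_n)_n$ being weakly null (equivalently, not $\ell_1$-equivalent, so by a Rosenthal-type dichotomy the bounded sequences built from Rademachers stay in the weakly compact regime). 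So the real plan is: (1) identify $1_A r_n^{\theta_n}$ up to distribution with a block of the Rademacher sequence — using that $A$ is dyadic of level $k$, that $n > k$, and the distribution-invariance axiom, one shows $1_A r_n^{\theta_n}$ has the same distribution as $\mu_A^{-1}$-scaled copies of tail Rademachers living on $A$; (2) invoke $(\star)$ to see these are weakly null; (3) transfer weak nullity back through the isometry afforded by equal distributions (Corollary \ref{Cor} and Proposition \ref{L3.7}).

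The main obstacle I anticipate is precisely step (1): making rigorous the claim that $1_A r_n^{\theta_n}$ is a \emph{distributional copy} of a genuine Rademacher function (or a direct sum of scaled Rademacher functions indexed by the dyadic components of $A$), so that $(\star)$ applies. One must be careful that, while $1_A r_n^{\theta_n}$ is $\{0,\pm 1\}$-valued with $|[1_A r_n^{\theta_n}=1]| = |[1_A r_n^{\theta_n}=-1]| = \tfrac12|A|$, its distribution on $[0,1]$ is that of $r_{n-k}$ rescaled to the set $A$ and extended by zero — not literally a Rademacher function on all of $[0,1]$. The cleanest fix is to work on the Haar system subspace supported on $A$ (which, after the natural affine rescaling identifying $A$ with $[0,1)$, is again a Haar system space with its own Rademacher sequence inheriting property $(\star)$), deduce weak nullity there, and then use that the inclusion of that subspace into $F$ is bounded. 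I would present the argument in that order: norm bound first, then the dyadic-mean-zero computation against $Z$, then the weak-compactness upgrade via $(\star)$ to close the gap between ``converges on a dense set of functionals'' and ``converges weakly.''
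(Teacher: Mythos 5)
Your proposal has a genuine gap at exactly the point you flag yourself, and neither of the two rescues you sketch closes it. The duality route fails because $Z=\spanS(\chi_I : I\in\mathcal D)$ is only total (weak$^*$-dense) in $F^\ast$, not norm-dense: already for $F=L^1$ the dual is $L^\infty$, where dyadic step functions are far from dense. A uniformly bounded sequence whose pairings with a merely total set of functionals tend to $0$ need not be weakly null, and the upgrade you invoke --- ``$(\star)$ plus a Rosenthal-type dichotomy puts us in the weakly compact regime'' --- does not work: $(\star)$ is a statement about the particular sequence $(r_n)$, and even if one shows that $(1_Ar_n^{\theta_n})$ has no $\ell_1$-subsequence, Rosenthal's theorem only yields weakly \emph{Cauchy} subsequences, whose weak$^*$ limits in $F^{\ast\ast}$ are not forced to vanish merely because they vanish on $Z$. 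Your second route --- ``transfer weak nullity through the isometry afforded by equal distributions'' --- also does not close: equidistribution of the \emph{individual} functions $1_Ar_n^{\theta_n}$ with rescaled Rademachers controls norms, not weak limits, and the proposed localization to $A$ (rescaling $A$ to $[0,1)$) is circular, since verifying that the localized Rademacher sequence is weakly null is essentially the statement being proved (and the rescaling need not even be dyadic when the number of components of $A$ is not a power of $2$).

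What is missing are two elementary devices the paper's proof rests on. First, for the full signed Rademachers: by the Mazur-type characterization (applied to subsequences), weak nullity of $(r_n)$ gives convex block combinations $\sum\alpha_j r_j$ with norms tending to $0$; the corresponding combinations $\sum\alpha_j r_j^{\theta_j}$ can be rewritten as $\sum_i\beta_i\tilde h_i$ for a faithful Haar system of sign-flipped Haar functions, hence have the \emph{same distribution} and the same norm by Proposition \ref{Inducc} and Corollary \ref{Cor}. This is how weak nullity legitimately transfers to $(r_n^{\theta_n})$: through convex combinations, not term-by-term equidistribution. Second, for the truncation, one uses
\begin{equation*}
1_Ar_n^{\theta_n}=\tfrac12\Big[\big(1_Ar_n^{\theta_n}+1_{A^c}r_n^{\theta_n}\big)+\big(1_Ar_n^{\theta_n}-1_{A^c}r_n^{\theta_n}\big)\Big],
\end{equation*}
and observes that both summands are full signed Rademachers $r_n^{\theta_n}$ and $r_n^{\theta_n'}$, hence weakly null by the first step. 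This decomposition is what replaces your localization step; without it, or an equivalent device, the argument does not go through.
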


\begin{proof}

\noindent Recall that if $(x_n)_{n=1}^\infty$ is a bounded sequence in the Banach space $X$, then $(x_n)_{n=1}^\infty$ is weakly convergent to zero if and only if there exist convex combinations $\Big(\sum\limits_{j = k_{n-1} + 1}^{k_n}\alpha_j x_j\Big)_{n=1}^\infty$, with $\sum\limits_{j = k_{n - 1} + 1}^{k_n} \alpha_j = 1$, $\alpha_j \ge 0$, so that $\Big\|\sum\limits_{j = k_{n - 1} + 1}^{k_n} \alpha_j x_j\Big\|$ converges to $0$, as $n\to \infty$. Thus, from the assumption that $(r_n)$ is weakly null, it follows that there exist convex combinations $\Big(\sum\limits_{j = k_{n-1}+1}^{k_n} \alpha_j r_j\Big)_{n=1}^\infty$
such that $\sum\limits_{j = k_{n-1}+1}^{k_n} \alpha_j = 1$, $\alpha_j \ge 0$, and $\Big\| \sum\limits_{j = k_{n-1}+1}^{k_n} \alpha_j r_j\Big\|$ converges to $0$, as $n\to \infty$.
For the given $n$ in $\mathbb{N}$, let us denote

\begin{equation}
f_n = \sum\limits_{j = k_{n-1}+1}^{k_n} \alpha_j r_j, \hspace{3 cm} g_n = \sum\limits_{j = k_{n-1}+1}^{k_n} \alpha_j r^{\theta_j}_j.
\end{equation}

\noindent We write $f_n = \sum\limits_{j = k_{n-1}+1}^{k_n} \alpha_j r_j$ as $f_n = \sum\limits_{i=1}^\infty\beta^n_i h_i$. Then it follows that $g_n = \sum\limits_{j = k_{n-1}+1}^{k_n} \alpha_j r^{\theta_j}_j$ is equal to $g_n = \sum\limits_{i=1}^\infty\beta^n_i \Tilde{h}_i$, where $\Tilde{h}_i$ is equal to $-h_i$ or $h_i$. Since $(\Tilde{h}_i)_{i=1}^\infty$ is a faithful Haar system, it follows from Corollary \ref{Cor} that $\|f_n\| = \|g_n\|$. Thus, $r^{\theta_n}_n$ is weakly null.

\noindent Next, we want to see that $1_A r^{\theta_n}_n$ is weakly null for any $A\in \mathcal D^+$, where $A$ is a finite union of elements in $\mathcal D_k$, with $n>k$.

\noindent Let $A^c$ be the complement of $A$ in $[0,1]$. Since $1_A r_n^{\theta_n}+1_{A^c} r_n^{\theta_n} =  r_n^{\theta_n}$, it follows that $1_A r_n^{\theta_n}+1_{A^c} r_n^{\theta_n}$ is weakly null. Secondly we note that $1_A r_n^{\theta_n}- 1_{A^c} r_n^{\theta_n}$  is of the form $r_n^{\theta'_n}$ for some choice of $\theta'_n$, and thus $1_A r_n^{\theta_n}- 1_{A^c} r_n^{\theta_n}$ is also weakly null. Finally, we note that
$$
1_A r_n^{\theta_n}= {\frac{1}{2}}\Big(1_A r_n^{\theta_n}+1_{A^c} r_n^{\theta_n} +1_A r_n^{\theta_n}-1_{A^c} r_n^{\theta_n}\Big),
$$
and hence, $1_A r_n^{\theta_n}$ is weakly null.
\end{proof}

\begin{lem}\label{main.}
Let $F$ be a Haar system space, for which the Rademacher sequence $(r_n)_{n=1}^\infty = \Big(\sum\limits_{I\in \mathcal{D}_n} h_I\Big)_{n=1}^\infty$ is weakly null.

\noindent Let $T: F\to F$ be a linear bounded operator, for which there exists $\delta > 0$, so that for all $j\in \mathbb{N}$,
$$
\langle T h_j, h_j\rangle \ge \delta \|h_j\|_F \|h_j\|_{F^\ast} = \delta |I_j|.
$$  
\noindent Let $\eta > 0$ be given.

\noindent Then there exists a faithful Haar system $(\tilde h_j)_{j=1}^\infty$ of the following special form. For $j\ge 2$, $\tilde h_j = \sum\limits_{I\in \Delta_{I_j}} \theta_I h_I $, where $\Delta_{I_j}$ is a pairwise disjoint subset of $\mathcal{D}$ whose union has the same measure as $I_j$ and $(\theta_I)_{I\in\Delta_{I_j}} \subset \{\pm 1\}$, so that:
\begin{enumerate}
\item[(i)] $\sum\limits_{i=1}^\infty {\underset{j\neq i}{\sum\limits_{j=1}^\infty}} \Big|\Big\langle T\Big({\dfrac{\Tilde{h}_i}{\|h_i\|_F}}\Big), {\dfrac{\Tilde{h}_j}{\|h_j\|_{F^\ast}}} \Big\rangle\Big| < \eta$,

\item[(ii)]
$
\langle T \Tilde{h}_j, \Tilde{h}_j\rangle \ge \delta \|\Tilde{h}_j\|_F \|\Tilde{h}_j\|_{F^\ast} = \delta |I_j|$, 
where the equality follows from Proposition \ref{Prim.},
for all $j\in \mathbb{N}$. In particular, $T$ restricted to the faithful Haar system $(\Tilde{h}_j)_{j=1}^\infty$ has also a large diagonal.
\end{enumerate}
\end{lem}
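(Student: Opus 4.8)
The plan is to build the faithful Haar system $(\tilde h_j)_j$ recursively, level by level in the natural tree order on $\mathcal D^+$, choosing each $\tilde h_j$ as a signed sum $\sum_{I\in\Delta_{I_j}}\theta_I h_I$ over a set $\Delta_{I_j}$ of dyadic intervals of equal total measure $|I_j|$, contained in the ``slot'' assigned to $\tilde h_j$ by the tree structure (the set $[\tilde h_k = \pm 1]$ where $j = 2k\mp 1$). The essential device is Lemma \ref{mult}: having fixed finitely many $\tilde h_i$, $i\le j$, each of which is a signed Rademacher-type sum supported on a union of dyadic intervals of some bounded generation $k_0$, the tails $1_A r_n^{\theta_n}$ are weakly null as $n\to\infty$ for every relevant $A\in\mathcal D^+$. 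So I would enumerate $\mathcal D^+$ as $(I_j)$ and process the $\tilde h_j$ in this order, maintaining two bookkeeping quantities: a budget $\eta_j > 0$ with $\sum_j \eta_j < \eta$ controlling the off-diagonal contribution of $\tilde h_j$, and the requirement $|\Delta_{I_j}| = |I_j|$ on total measure.

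The recursion step is the heart of the matter. Suppose $\tilde h_1,\dots,\tilde h_{j-1}$ have been chosen; I need $\tilde h_j$ supported in its allotted slot $S_j$ (which by Remark \ref{aftFHS} and the inductive construction is a finite union of dyadic intervals with $|S_j| = |I_j|$), $\{0,\pm1\}$-valued, mean zero, such that the diagonal bound (ii) holds for $T\tilde h_j$ and such that the cross terms between $\tilde h_j$ and the already-chosen $\tilde h_i$ ($i < j$), in both slots $\langle T\tilde h_i, \tilde h_j\rangle$ and $\langle T\tilde h_j, \tilde h_i\rangle$, are each small (summing to at most $\eta_j\|h_j\|_F\|h_j\|_{F^\ast}$ after normalization). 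Fix a large integer $m$; inside $S_j$, subdivide each constituent dyadic interval into $2^m$ equal dyadic children, group them in consecutive pairs, and on the $\ell$-th pair put $\theta$ alternating $\pm 1$ — in other words take $\tilde h_j = 1_{S_j} r_{n}^{\theta_n}$ restricted appropriately, i.e.\ a high-generation signed Rademacher block living on $S_j$. For such a choice: (a) $\langle T\tilde h_j, \tilde h_j\rangle \ge \delta|I_j|$ is forced by hypothesis on $T$ together with Proposition \ref{Inducc}/Corollary \ref{Cor} — $\tilde h_j$ has, after the fact, the same distribution as $h_{I_j}$ scaled appropriately, but more directly one should argue that $\tilde h_j$ is itself (up to the enumeration) a Haar function of a faithful system, so $\langle T\tilde h_j,\tilde h_j\rangle\ge \delta\|\tilde h_j\|_F\|\tilde h_j\|_{F^\ast}$ follows from the corresponding bound on the genuine Haar function once one checks the faithful-system diagonal transfers — wait, this needs care, see below; (b) $\langle T\tilde h_j, \tilde h_i\rangle = \langle \tilde h_j, T^\ast\tilde h_i\rangle$ is small because $T^\ast\tilde h_i \in F^\ast$ is a fixed functional and $\tilde h_j = 1_{S_j}r_n^{\theta_n} \overset{w}{\to}0$ in $F$ by Lemma \ref{mult} as $n\to\infty$ (here one uses that $S_j$ is a finite union of elements of $\mathcal D_k$ with $n > k$, which holds once $m$ is large); and (c) $\langle T\tilde h_i, \tilde h_j\rangle = x^\ast_{?}$ — this is an inner product of the fixed function $T\tilde h_i \in F \subset L_1$ against $\tilde h_j$, which is bounded by $\|T\tilde h_i\|_{L_1}$ times $\|\tilde h_j\|_\infty = 1$, not obviously small; so for this one uses instead that $\langle T\tilde h_i,\tilde h_j\rangle = \int (T\tilde h_i)\,\tilde h_j$ and $\tilde h_j$ oscillates at scale $2^{-n}$ with mean zero on each dyadic interval of generation $k < n$, so against the $L_1$-function $T\tilde h_i$ — approximated in $L_1$ by a dyadic step function of bounded generation — the integral tends to $0$ as $n\to\infty$ by a standard martingale/mean-zero argument. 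Then choose $n = n_j$ large enough that all these cross terms, finitely many at step $j$, total less than $\eta_j\|h_j\|_F\|h_j\|_{F^\ast}$.

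After the recursion, summing over $j$ gives $\sum_i\sum_{j\ne i}|\langle T(\tilde h_i/\|h_i\|_F), \tilde h_j/\|h_j\|_{F^\ast}\rangle| \le \sum_j \eta_j < \eta$, which is (i); and (ii) was arranged at each step. One must also verify that the sequence built this way genuinely is a faithful Haar system in the sense of Definition \ref{FHS}: $\tilde h_1 = 1_{[0,1]}$, $\mathrm{supp}(\tilde h_2) = [0,1]$, and the nesting $\mathrm{supp}(\tilde h_{2k-1}) = [\tilde h_k = 1]$, $\mathrm{supp}(\tilde h_{2k}) = [\tilde h_k = -1]$ — all of which hold by construction provided at each step the slot $S_j$ is chosen to be exactly $[\tilde h_{\lfloor j/2\rfloor} = \pm 1]$ and $\tilde h_j$ is supported on all of $S_j$ with $|[\tilde h_j = 1]| = |[\tilde h_j = -1]| = \tfrac12|S_j|$ (automatic for the alternating-sign block). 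I expect the main obstacle to be item (c): bounding $\langle T\tilde h_i, \tilde h_j\rangle$ — unlike $\langle T\tilde h_j, \tilde h_i\rangle$, which is handled cleanly by weak nullity of $\tilde h_j$ in $F$, the term $\langle T\tilde h_i, \tilde h_j\rangle$ pairs a fixed $L_1$-function against $\tilde h_j$, and making it small requires exploiting that $\tilde h_j$ has mean zero on every dyadic interval of generation below $n_j$, plus the $L_1$-density of dyadic step functions; getting this uniformly over the finitely many $i < j$ at each step, while simultaneously keeping $S_j \in \mathcal D^+$ and $|S_j| = |I_j|$, is the delicate bookkeeping. A secondary subtlety is justifying the diagonal lower bound (ii) for $\tilde h_j$ directly from the hypothesis on $T$: since the hypothesis is stated only for the genuine Haar functions $h_j$, one needs that any function of the form $1_{S}r_n^\theta$ arising in the construction is, after completing the construction, one of the Haar functions of an honest faithful system, and then invoke that $\langle T\tilde h_j,\tilde h_j\rangle$ for a faithful system inherits the bound — but in fact the cleanest route is to observe that the construction can be arranged so that $\tilde h_j$ itself equals $h_{I_j}$ composed with a measure-preserving dyadic rearrangement of $[0,1]$, under which $T$ is conjugated to an operator whose diagonal on the Haar basis is still $\ge\delta$; alternatively, and more simply, note we never actually need the diagonal bound to be derived — it is part of what the faithful system must satisfy, and since a faithful Haar system is isometrically equivalent to the Haar system (Corollary \ref{Cor}) with $\langle \tilde h_j,\tilde h_j\rangle_{F,F^\ast}$-pairing equal to $|I_j|$, the bound $\langle T\tilde h_j,\tilde h_j\rangle\ge\delta|I_j|$ must be \emph{imposed} during the construction, i.e.\ we choose only those high-generation blocks for which it holds, and such blocks exist because the \emph{average} over all sign-choices and all fine subdivisions of $\langle T(1_{S_j}r_n^\theta), 1_{S_j}r_n^\theta\rangle$ equals $\sum_{I\subset S_j}\langle Th_I,h_I\rangle \ge \delta|S_j| = \delta|I_j|$, so at least one choice works.
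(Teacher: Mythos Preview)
Your proposal is correct and follows essentially the same approach as the paper: a recursive construction with a summable error budget $(\eta_j)$, Lemma~\ref{mult} (weak nullity) for the cross terms $\langle T^\ast\tilde h_i,\tilde h_j\rangle$, approximation of $T\tilde h_i$ by finite Haar sums plus orthogonality for the cross terms $\langle T\tilde h_i,\tilde h_j\rangle$, and the averaging argument over sign choices (stated in the paper as the probabilistic Lemma~\ref{Probabilistic}) to secure the diagonal lower bound. The paper organizes the step a bit more cleanly---first fixing the approximation level $n_j$ so that your item (c) holds for \emph{every} sign choice, then for each $m>n_j$ picking signs $\gamma_{j,m}$ via the expectation argument, and finally choosing $m_j$ large by weak nullity---but every ingredient you identify (including the final, correct justification of~(ii) after your initial hesitation) matches the paper's proof.
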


\noindent To prove Lemma \ref{main.} we need the following probabilistic lemma.

\begin{lem}\label{Probabilistic}
\noindent Given $k, m$ in $\mathbb{N}$ with $k < m$, and $T$ as in Lemma \ref{main.}, let $\varepsilon = \Big(\varepsilon (J)\Big)_{J\in \mathcal{D}_m}\subset \{\pm 1\}$ be a family of Bernoulli variables, i.e., $\varepsilon = \Big(\varepsilon (J)\Big)_{J\in \mathcal{D}_m}$ are independent random variables on some probability space $(\Omega, \Sigma, \mathbb{P})$, and for given $m\in \mathbb{N}$ and $J\in \mathcal{D}_m$, it follows that $\mathbb{P}\Big(\varepsilon (J) = 1\Big) = \mathbb{P}\Big(\varepsilon (J) = - 1\Big) = {\frac{1}{2}}$.
Then
$$
\mathbb{E}\Biggl({\dfrac{\Big\langle \Tilde{h}^{\varepsilon}_{k, m}, T(\Tilde{h}^{\varepsilon}_{k, m})\Big\rangle}{2^{-k}}}\Biggr) \ge \delta,\;\;\mbox{for}\;\;\Tilde{h}_{k, m}^{\varepsilon} = \sum\limits_{J\in\Delta_k}\varepsilon (J) h_J = 1_{\Delta_k^\ast} \cdot r^{\varepsilon}_m,
$$
where $\Delta_k$ is a subset of $\mathcal{D}_m$, satisfying $|\Delta_k^\ast| = 2^{-k}$ with $\Delta^\ast_k = \cup_{J\in \Delta_k} J$.
\end{lem}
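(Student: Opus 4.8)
The plan is to expand the bilinear form $\langle \Tilde h_{k,m}^\varepsilon, T(\Tilde h_{k,m}^\varepsilon)\rangle$ in the Haar expansion of $\Tilde h_{k,m}^\varepsilon = \sum_{J\in\Delta_k}\varepsilon(J)h_J$ and take expectation over the Bernoulli variables $(\varepsilon(J))_{J\in\Delta_k}$. Writing the form out,
\begin{equation}\notag
\big\langle \Tilde h_{k,m}^\varepsilon, T(\Tilde h_{k,m}^\varepsilon)\big\rangle = \sum_{J\in\Delta_k}\sum_{J'\in\Delta_k}\varepsilon(J)\varepsilon(J')\,\langle h_J, T(h_{J'})\rangle .
\end{equation}
Since the $\varepsilon(J)$ are independent, mean-zero and $\varepsilon(J)^2 = 1$, we have $\mathbb{E}\big(\varepsilon(J)\varepsilon(J')\big) = \delta_{J,J'}$, so all off-diagonal terms vanish in expectation and
\begin{equation}\notag
\mathbb{E}\big\langle \Tilde h_{k,m}^\varepsilon, T(\Tilde h_{k,m}^\varepsilon)\big\rangle = \sum_{J\in\Delta_k}\langle h_J, T(h_J)\rangle = \sum_{J\in\Delta_k}\langle T h_J, h_J\rangle .
\end{equation}

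Next I would invoke the large-diagonal hypothesis on $T$ from Lemma \ref{main.}: for each $J\in\Delta_k\subset\mathcal D_m$ we have $\langle T h_J, h_J\rangle \ge \delta\|h_J\|_F\|h_J\|_{F^\ast} = \delta|J| = \delta 2^{-m}$, where the middle equality is Proposition \ref{Prim.}(d) applied with $A = \operatorname{supp}(h_J) = J$ (so $\mu_J\nu_J = |J|^{-1}$, hence $\|h_J\|_F\|h_J\|_{F^\ast} = |J|$). Summing over the $|\Delta_k|$ intervals in $\Delta_k$ and using that $\Delta_k^\ast = \bigcup_{J\in\Delta_k}J$ is a disjoint union of measure $2^{-k}$, so that $|\Delta_k| = 2^{-k}/2^{-m} = 2^{m-k}$, gives
\begin{equation}\notag
\mathbb{E}\big\langle \Tilde h_{k,m}^\varepsilon, T(\Tilde h_{k,m}^\varepsilon)\big\rangle \ge \delta\cdot 2^{m-k}\cdot 2^{-m} = \delta\, 2^{-k}.
\end{equation}
Dividing by $2^{-k}$ and using linearity of expectation yields $\mathbb{E}\big(\langle \Tilde h_{k,m}^\varepsilon, T(\Tilde h_{k,m}^\varepsilon)\rangle / 2^{-k}\big)\ge\delta$, which is the claim. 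The identity $\Tilde h_{k,m}^\varepsilon = 1_{\Delta_k^\ast}\cdot r_m^\varepsilon$ is just the observation that $r_m^\varepsilon = \sum_{J\in\mathcal D_m}\varepsilon(J)h_J$ restricted by the indicator of $\Delta_k^\ast$ keeps exactly the terms with $J\subseteq\Delta_k^\ast$, i.e. $J\in\Delta_k$; no estimate is needed there.

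There is essentially no hard obstacle here: the proof is a one-line orthogonality computation for Bernoulli sums combined with the diagonal lower bound and the normalization identity $\|h_J\|_F\|h_J\|_{F^\ast} = |J|$. The only point requiring a word of care is bookkeeping — confirming that $\Delta_k$ consists of level-$m$ dyadic intervals (so each $\langle T h_J, h_J\rangle$ is controlled by the hypothesis at index $\imath(J)$), and that the cardinality count $|\Delta_k| = 2^{m-k}$ matches the measure constraint $|\Delta_k^\ast| = 2^{-k}$. Everything else is linearity of expectation and independence.
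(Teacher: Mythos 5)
Your proof is correct and follows essentially the same route as the paper: expand the bilinear form, use independence and $\mathbb{E}(\varepsilon(J)\varepsilon(J')) = \delta_{J,J'}$ to kill the off-diagonal terms, and then apply the large-diagonal hypothesis $\langle Th_J,h_J\rangle\ge\delta|J|$ and sum $\sum_{J\in\Delta_k}|J| = |\Delta_k^\ast| = 2^{-k}$. The only cosmetic difference is that you count $|\Delta_k|=2^{m-k}$ explicitly where the paper just sums the measures directly; the content is identical.
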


\begin{proof} [Proof of Lemma \ref{Probabilistic}]

\noindent We compute:

\begin{align*}
\mathbb{E} \bigg(\Big\langle\Tilde{h}_{k, m}^\varepsilon, T(\Tilde{h}_{k, m}^\varepsilon)\Big\rangle\cdot{\frac{1}{2^{-k}}}\bigg)
&= \mathbb{E} \bigg(\bigg\langle \sum\limits_{J'\in\Delta_k}\varepsilon (J') h_{J'}, \sum\limits_{J\in\Delta_k}\varepsilon (J) T(h_J)\bigg\rangle \cdot{\frac{1}{2^{-k}}}\bigg)\\
&= \mathbb{E} \bigg(\sum\limits_{J'\in\Delta_k} \sum\limits_{J\in\Delta_k} \varepsilon (J') \varepsilon (J) \Big\langle h_{J'}, T(h_J)\Big\rangle \cdot{\frac{1}{2^{-k}}}\bigg)\\
&= \sum\limits_{J\in\Delta_k} \Big\langle h_J, T(h_J)\Big\rangle {\frac{1}{2^{-k}}}\\
&\qquad+ \underset{J\neq J'}{\sum\limits_{J, J' \in \Delta_k}} \mathbb{E} \Big(\varepsilon (J) \varepsilon (J')\Big)\cdot \Big\langle h_{J'}, T(h_J)\Big\rangle {\frac{1}{2^{-k}}}\\
&= \sum\limits_{J\in\Delta_k} \Big\langle h_J, T(h_J)\Big\rangle {\frac{1}{2^{-k}}}\\ &
\ge \sum\limits_{J\in\Delta_k} \delta |J| {\frac{1}{2^{-k}}} \\
&= \delta {\frac{1}{2^{-k}}} \sum\limits_{J\in\Delta_k} |J| = \delta {\frac{1}{2^{-k}}} 2^{-k} = \delta.  \end{align*}

\end{proof}

\noindent Using Lemma \ref{Probabilistic}, we deduce that for all $m > k$ and disjoint collections $\Delta_k\subset\mathcal{D}$, satisfying $|\Delta^\ast_k| = 2^{-k}$, we can choose $\gamma_{k, m} = \Big(\gamma_{k, m}(J)\Big)_{J\in\Delta_k}\subset \{\pm 1\}$ so that the following is true:

\begin{equation}\label{one}
\Big\langle T(\Tilde{h}_{k, m}^{\gamma_{k, m}}), \Tilde{h}_{k, m}^{\gamma_{k, m}}\Big\rangle \ge \delta \cdot2^{-k}.
\end{equation}

\vspace{.2 in}

\begin{proof} [Proof of Lemma \ref{main.}]

\noindent For $m\in \mathbb{N}$ and $\theta = \Big(\theta (J)\Big)_{J\in \mathcal{D}_m}\subset \{\pm 1\}$, let $r^\theta_m$ be as in Lemma \ref{mult}. Let also $(\eta_i)_i$ be a sequence of positive numbers satisfying $\sum\limits_{i = k+1}^\infty \eta_i < \eta_k$, and chosen in a way that $\sum\limits_{i = 1}^\infty \eta_i < \eta$.

\noindent We choose

\vspace{.1 in}

\begin{equation}\label{cc.0}
\tilde h_1 = h_\emptyset = 1_{[0, 1]}.
\end{equation}

\vspace{.2 in}

\noindent For $j\ge 2$, we will choose inductively $\tilde h_j$ in the form
\begin{equation}\label{form}
\tilde h_j = \underset{I\subset \Delta^\ast_j}{\sum\limits_{I \in \mathcal{D}_{m_j}}} \theta_j(I) h_I,
\end{equation}
where $\Delta_j^\ast$ is a disjoint union of dyadic intervals, and $m_j > m_{j-1}$, together with $\underset{\hspace{1cm}I\subset \Delta_j^\ast}{\Big(\theta_j(I)\Big)_{I\in \mathcal{D}_{m_j}}} \subset \{\pm 1\} $, so that $\suppS(\tilde h_2) = \Delta^\ast_2 = [0, 1]$, and

\begin{equation}\label{c.0}
\Delta_j^\ast = \suppS(\tilde h_j) = [\tilde h_k=1], \;\mathrm{if}\; j=2k-1, \; j\ge 3,
\end{equation}

\begin{equation}\label{c.01}
\Delta_j^\ast = \suppS(\tilde h_j) = [\tilde  h_k=-1],\; \mathrm{if}\; j=2k,\; j\ge 4,
\end{equation}

\begin{equation}\label{c.3}
\sum\limits_{i=1}^{j-1} \Big|\Big\langle {\frac{T(\tilde h_i)}{\|h_i\|_F}},  {\frac{\tilde h_j}{\|h_j\|_{F^\ast}}} \Big\rangle\Big| <{\frac{\eta_j}{2}}, \end{equation}

\begin{equation}\label{c.4}
\sum\limits_{i=1}^{j-1}\Big|\Big\langle {\frac{T^\ast(\tilde h_i)}{{\|h_i\|}_{F^\ast}}}, {\frac{\tilde h_j}{\|h_j\|_F}}\Big\rangle\Big| = \sum\limits_{i=1}^{j-1} \Big|\Big\langle {\frac{\tilde h_i}{\|h_i\|_{F^\ast}}}, {\frac{T(\tilde h_j)}{\|h_j\|_F}}\Big\rangle\Big| < {\frac{\eta_j}{2}},
\end{equation}

\begin{equation}\label{c.6}
\bigg\langle {\frac{T^\ast(\tilde h_j)}{\|h_j\|_{F^\ast}}},  {\frac{\tilde h_j}{\|h_j\|_F}} \bigg\rangle > \delta.
\end{equation}

\vspace{.3 in}

\noindent Note that \eqref{c.3} and \eqref{c.4} will imply $(i)$ of Lemma \ref{main.} and \eqref{c.6} will imply $(ii)$, while \eqref{cc.0}, \eqref{form}, \eqref{c.0} and \eqref{c.01} will ensure that $(\tilde h_j)_j$ is a faithful Haar system.

\vspace{.1 in}

\noindent Since we have chosen $\Tilde{h}_1 = h_\emptyset = h_1 = 1$, it follows that \eqref{c.6} is true for $j=1$ and \eqref{c.3}, \eqref{c.4} are vacuous in that case.

\vspace{.1 in}

\noindent We now define $\tilde h_2$. For that purpose we first choose $n_2 > 0$ so that there exist finite linear combinations of Haar functions, denoted by $Y_1$  $(Y_1 \in \spanS(h_I: I \in \mathcal{D}^{n_2})\;)$, and so that

\begin{equation}\label{c.11}
\Big\|T(\tilde h_1) -  Y_1\Big\|_{F^\ast} < {\frac{\eta_2}{2}}.
\end{equation}

\vspace{.2 in}

\noindent Therefore, it is true that for $m > n_2$ and for all $\theta = \Big(\theta (I) \Big)_{I \in \mathcal{D}_m} \subset \{\pm 1\}$:

\begin{equation}\label{c.55}
\Big|\Big\langle Y_1,   \frac{r^\theta_m}{\|h_2\|_{F^\ast}} \Big\rangle\Big| = 0.
\end{equation}

\vspace{.1 in}

\noindent Together, \eqref{c.11} and \eqref{c.55}, imply that:

\begin{align*}
\Big|\Big\langle T(\tilde h_1),  \frac {r^\theta_m}{\|h_2\|_{F^\ast}} \Big\rangle\Big|& 
=  \Big|\Big\langle T(\tilde h_1) - Y_1 + Y_1, \frac{r^\theta_m}{\|h_2\|_{F^\ast}}\Big\rangle\Big|\\&
\le\Big|\Big\langle T(\tilde h_1) - Y_1,  \frac{r^\theta_m}{\|h_2\|_{F^\ast}}\Big\rangle\Big| + \Big|\Big\langle Y_1,  \frac{r^\theta_m}{\|h_2\|_{F^\ast}}\Big\rangle\Big|\\
&= \Big|\Big\langle T(\tilde h_1) - Y_1,  \frac{r^\theta_m}{\|h_2\|_{F^\ast}}\Big\rangle\Big| < {\dfrac{\eta_2}{2}}\cdot \bigg\|\frac{r^\theta_m}{\|h_2\|_{F^\ast}}\bigg\|_{F^\ast} < \dfrac{\eta_2}{2},
\end{align*}

\noindent which is true for any choice of $m > n_2$ and for all $\theta = \Big(\theta (I) \Big)_{I \in \mathcal{D}_m} \subset \{\pm 1\}$, and which holds because $r^\theta_m$ and $h_2$ have the same distribution. Thus, the inequality \eqref{c.3} holds for $j=2$ when we replace $\tilde h_2$ by $r^\theta_m$ for any choice of $m > n_2$ and any $\theta = \Big(\theta(I)\Big)_{I\in \mathcal{D}_m}\subset \{\pm 1\}$. 

\vspace{.1 in}

\noindent We also want to ensure \eqref{c.4} and \eqref{c.6}. For that purpose we refer to \eqref{one} (the consequence of Lemma \ref{Probabilistic}), and for $m > n_2$ we choose $\gamma_{2, m} = \Big(\gamma_{2, m}(I)\Big)_{I \in \mathcal{D}_m} \subset \{\pm 1\}$, so that

\begin{equation}
\bigg\langle {\frac{T^\ast(r^{\gamma_{2, m}}_m)}{\|h_2\|_{F^\ast}}},  {\frac{r^{\gamma_{2, m}}_m}{\|h_2\|_F}} \bigg\rangle > \delta.
\end{equation}

\noindent In addition, by Lemma \ref{mult} it follows that $r^{\gamma_{2, m}}_m \overset{w}{\longrightarrow} 0$, and hence, there exists $m_2 > n_2$, so that:

\begin{equation}\label{c.44}
\Big|\Big\langle T^\ast(\tilde h_1),  \frac{r^{\gamma_{2, m_2}}_{m_2}}{\|h_2\|_F} \Big\rangle\Big| < {\frac{\eta_2}{2}}.
\end{equation}

\noindent Thus \eqref{c.4} and \eqref{c.6} hold for $j=2$ with $\theta_2 = \gamma_{2, m_2}$, and $\tilde h_2 = \sum\limits_{I \in \mathcal{D}_{m_2}} \theta_2(I) h_I = \sum\limits_{I \in \mathcal{D}_{m_2}} \gamma_{2, m_2}(I) h_I$.

\vspace{.03 in}

\noindent Next, we define $\tilde h_j$ for $j\ge 3$, by assuming that $\tilde h_i$ for $i < j$ has been chosen satisfying the conditions \eqref{c.0}, \eqref{c.01}, \eqref{c.3}, \eqref{c.4}, and \eqref{c.6}. 

\vspace{.05 in}

\noindent We choose $n_j\ge m_{j-1}$ so that for $i = 2, \ldots, j-1$, there exist finite linear combinations of Haar functions, $Y_i$, with $Y_i \in \spanS(h_J: J \in \mathcal{D}^{n_j})$, so that

\begin{equation}\label{c.1}
\bigg\|{\frac{T(\tilde h_i)}{\|h_i\|_F}} - Y_i\bigg\|_{F^\ast} < {\frac{\eta_j}{2(j - 1)}},\;\; i = 2, \ldots, j-1.
\end{equation}

\noindent Then, for $k\ge 2$ and $j=2k - 1$, we take $\Delta_j^\ast = \Delta^\ast_{I^+_k} = [\tilde h_k = 1]$, and if $j=2k$, we choose $\Delta_j^\ast = \Delta^\ast_{I^-_k} = [\tilde h_k = -1]$. Thus, by this we want to ensure \eqref{c.0} and \eqref{c.01}.

\noindent Further, we notice that for $m > n_j$ and for all $\theta = \Big(\theta (J) \Big){\underset{J \in \mathcal{D}_m}{_{J\subset {\Delta^\ast_j}}}} \subset \{\pm 1\}$ it is true that:

\begin{equation}\label{c.5}
\sum\limits_{i=1}^{j-1} \Big|\Big\langle Y_i,  {\frac{1_{\Delta^\ast_j} r^\theta_m}{\|h_j\|_{F^\ast}}} \Big\rangle\Big| = 0,
\end{equation}

\noindent which, combined with \eqref{c.1}, yields the following computation:

\begin{align*}
\sum\limits_{i=1}^{j-1} \Big|\Big\langle {\frac{T(\tilde h_i)}{\|h_i\|_F}},  {\frac{1_{\Delta^\ast_j} r^\theta_m}{\|h_j\|_{F^\ast}}} \Big\rangle\Big|& 
= \sum\limits_{i=1}^{j-1} \Big|\Big\langle {\frac{T(\tilde h_i)}{\|h_i\|_F}} - Y_i + Y_i, {\frac{1_{\Delta^\ast_j} r^\theta_m}{\|h_j\|_{F^\ast}}}\Big\rangle\Big|\\&
\le\sum\limits_{i=1}^{j-1} \Big|\Big\langle {\frac{T(\tilde h_i)}{\|h_i\|_F}} - Y_i,  {\frac{1_{\Delta^\ast_j} r^\theta_m}{\|h_j\|_{F^\ast}}}\Big\rangle\Big|+ \sum\limits_{i=1}^{j-1} \Big|\Big\langle Y_i,  {\frac{1_{\Delta^\ast_j} r^\theta_m}{\|h_j\|_{F^\ast}}}\Big\rangle\Big|\\
&= \sum\limits_{i=1}^{j-1} \Big|\Big\langle {\frac{T(\tilde h_i)}{\|h_i\|_F}} - Y_i,  {\frac{1_{\Delta^\ast_j} r^\theta_m}{\|h_j\|_{F^\ast}}}\Big\rangle\Big| < {\frac{\eta_j}{2}}.
\end{align*}

\noindent Thus, the inequality \eqref{c.3} holds for $j$ when we replace $\tilde h_j$ by $1_{\Delta_j^\ast} r^\theta_m$ for any choice of $m > n_j$ and any $\theta = \Big(\theta(I)\Big)_{I\in \mathcal{D}_m}\subset \{\pm 1\}$.

\noindent Now we want to ensure \eqref{c.4} and \eqref{c.6}. We use \eqref{one}, which is a consequence of Lemma \ref{Probabilistic}, to choose $\gamma_{j, m} = \Big(\gamma_{j, m}(J)\Big){\underset{J \in \mathcal{D}_m}{_{J\subset {\Delta^\ast_j}}}} \subset \{\pm 1\}$, for $m > n_j$, so that

\begin{equation}
\Big\langle {\frac{T^\ast(1_{\Delta_j^\ast} r^{\gamma_{j, m}}_m)}{\|h_j\|_{F^\ast}}},  {\frac{1_{\Delta_j^\ast} r^{\gamma_{j, m}}_m}{\|h_j\|_F}} \Big\rangle > \delta.
\end{equation}

\noindent Moreover, Lemma \ref{mult} implies that $1_{\Delta_j^\ast} r^{\gamma_{j, m}}_m \overset{w}{\longrightarrow} 0$. Therefore, there exists $m_j > n_j$, so that:

\begin{equation}\label{c.44}
\sum\limits_{i=1}^{j-1} \Big|\Big\langle {\frac{T^\ast(\tilde h_i)}{\|h_i\|_{F^\ast}}},  {\frac{1_{\Delta_j^\ast} r^{\gamma_{j, m_j}}_{m_j}}{\|h_j\|_F}} \Big\rangle\Big| < {\frac{\eta_j}{2}}.
\end{equation}

\noindent Therefore, \eqref{c.4} and \eqref{c.6} hold with $\theta_j = \gamma_{j, m_j}$, and $ \tilde h_j = \underset{I\subset \Delta^\ast_j}{\sum\limits_{I \in \mathcal{D}_{m_j}}} \theta_j(I) h_I = \underset{I\subset \Delta^\ast_j}{\sum\limits_{I \in \mathcal{D}_{m_j}}} \gamma_{j, m_j}(I) h_I$.

\end{proof}

\begin{thm}[Approximate factorization of a diagonal operator with a large diagonal through a bounded operator with a large diagonal]\label{commut}

Let $F$ be a Haar system space, for which the Rademacher sequence $(r_n)_{n=1}^\infty = \Big(\sum\limits_{I\in \mathcal{D}_n} h_I\Big)_{n=1}^\infty$ is weakly null.
Let $T: F\to F$ be a bounded linear operator, so that for any $j\in \mathbb{N}$,
$$
\langle T h_j, h_j\rangle \ge \delta \|h_j\|_F \|h_j\|_{F^\ast} = \delta |I_j|.
$$

\noindent Let $\eta > 0$ be given, and let $(\Tilde{h}_j)_j$ be a faithful Haar system satisfying conditions $(i)$ and $(ii)$ of Lemma \ref{main.}. Then the diagonal operator $D: F\to F$ given by

\begin{equation}\label{D}
D(h_j) = \Big\langle T\bigg({\dfrac{\Tilde{h}_j}{\|h_j\|}_F}\bigg), {\frac{\Tilde{h}_j}{\|h_j\|_{F^\ast}}}\Big\rangle h_j, \; j\in \mathbb{N}
\end{equation}

\noindent is bounded and there exist bounded linear operators $A, B: F\to F$ with $\|A\| \|B\| \le 1$ so that $\|D - BTA\|\le 2 \eta$ and $\|D\|\le \|T\| + 2\eta$.
That is, the following diagram commutes with error $2 \eta$.

  \adjustbox{scale=1.5,center}   
 {
\begin{tikzcd}
F\arrow{r} {T} & F\arrow{d}{B}\\
F\arrow{u}{A} \arrow{r} [below]{D} & F\\
\end{tikzcd}
}

\end{thm}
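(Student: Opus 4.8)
The plan is to build the operators $A$ and $B$ directly from the faithful Haar system $(\tilde h_j)_j$ produced by Lemma \ref{main.}, using the projection $P$ of Proposition \ref{L3.7} on one side. Concretely, define
\[
A: F\to F,\qquad A(h_j) = \frac{\tilde h_j}{\|h_j\|_F},
\]
which by Corollary \ref{Cor} is an isometry (since $(h_j)$ and $(\tilde h_j)$ are isometrically equivalent in $F$, which is a Haar system space), and define
\[
B: F\to F,\qquad B(f) = \sum_{j=1}^\infty \Big\langle f, \frac{\tilde h_j}{\|h_j\|_{F^\ast}}\Big\rangle h_j .
\]
By Proposition \ref{L3.7}, the map $f\mapsto \sum_j \langle f,\tilde h_j/\|h_j\|_{F^\ast}\rangle\,\tilde h_j/\|h_j\|_F$ is a norm-one projection onto $G=\overline{\spanS(\tilde h_j)}$; composing it with the isometry $\tilde h_j/\|h_j\|_F\mapsto h_j$ (the inverse of $A$ restricted to its range) shows $B$ is well defined and $\|B\|\le 1$. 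Hence $\|A\|\,\|B\|\le 1$, giving the required norm bound.

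Next I would compute $BTA$ in the Haar basis. For each $j$,
\[
BTA(h_j) = B\!\left(T\frac{\tilde h_j}{\|h_j\|_F}\right) = \sum_{i=1}^\infty \Big\langle T\frac{\tilde h_j}{\|h_j\|_F},\frac{\tilde h_i}{\|h_i\|_{F^\ast}}\Big\rangle h_i ,
\]
so that $BTA$ is represented by the matrix whose $(i,j)$ entry is $\big\langle T(\tilde h_j/\|h_j\|_F),\tilde h_i/\|h_i\|_{F^\ast}\big\rangle$. Its diagonal part is exactly the operator $D$ of \eqref{D} (here one uses $\langle T(\tilde h_j/\|h_j\|_F),\tilde h_j/\|h_j\|_{F^\ast}\rangle = \langle T(\tilde h_j),\tilde h_j\rangle / (\|h_j\|_F\|h_j\|_{F^\ast}) \ge \delta$ by (ii) of Lemma \ref{main.}, which also shows $D$ has a large diagonal). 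Therefore $D - BTA$ is the off-diagonal part of that matrix, and the task reduces to bounding the norm of an operator from its off-diagonal coefficients.

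The main obstacle — and the place where assumption (i) of Lemma \ref{main.} enters — is estimating $\|D-BTA\|$ from the smallness of $\sum_i\sum_{j\ne i}\big|\langle T(\tilde h_i/\|h_i\|_F),\tilde h_j/\|h_j\|_{F^\ast}\rangle\big| <\eta$. The idea is: for $f=\sum_j a_j h_j\in F$ with $\|f\|_F\le 1$, using that $(h_j)$ is a monotone basis one has $|a_j| = |\langle f, \mu\, h_j\rangle|\cdot\|h_j\|_{F}/|I_j|\cdot\ldots$; more cleanly, $|a_j\,\|h_j\|_F| = |\langle \nu_{I_j} h_j, f\rangle|\cdot\|h_j\|_F\|h_j\|_{F^\ast}/\ldots$ — the point is that each normalized coefficient is controlled by $\|f\|_F$ uniformly, since $\|h_j\|_F\|h_j\|_{F^\ast}=|I_j|$ and the biorthogonal functional $\nu_{I_j}h_j$ has norm $\le 1$ in $F^\ast$ by Proposition \ref{Prim.}. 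Then
\[
(D-BTA)f = -\sum_{i}\Big(\sum_{j\ne i} a_j\Big\langle T\frac{\tilde h_j}{\|h_j\|_F},\frac{\tilde h_i}{\|h_i\|_{F^\ast}}\Big\rangle\Big) h_i,
\]
and splitting the double sum and using Proposition \ref{Prim.}(a) (so that $\|\cdot\|_F\le\|\cdot\|_{L_\infty}$ and coefficients are bounded by a constant times $\|f\|_F$), one bounds $\|(D-BTA)f\|_F$ by a constant multiple of $\sum_i\sum_{j\ne i}|\langle T(\tilde h_i/\|h_i\|_F),\tilde h_j/\|h_j\|_{F^\ast}\rangle|<\eta$; absorbing the harmless monotone-basis constant (which is $1$ here, since the Haar basis is monotone in $F$ by Proposition \ref{Prim.}(c)) yields $\|D-BTA\|\le 2\eta$. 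Finally $\|D\|\le\|BTA\|+\|D-BTA\|\le\|T\|+2\eta$ since $\|A\|\|B\|\le1$, which completes the proof and shows the diagram commutes up to error $2\eta$. I expect the only genuinely delicate point to be making the coefficient estimate and the rearrangement of the double sum rigorous; everything else is bookkeeping with the already-established isometry and projection.
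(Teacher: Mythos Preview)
Your overall strategy is exactly the paper's: build $A$ as the isometry coming from Corollary \ref{Cor}, build $B$ as the projection of Proposition \ref{L3.7} followed by $A^{-1}$, read off $D$ as the diagonal of $BTA$, and bound the off-diagonal part using condition (i) of Lemma \ref{main.} together with the monotone-basis coefficient bound. So conceptually there is nothing new to add.

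However, there is a concrete normalization slip that breaks the argument as written. Your map $A(h_j)=\tilde h_j/\|h_j\|_F$ is \emph{not} an isometry: Corollary \ref{Cor} gives $\big\|\sum_j \xi_j h_j\big\|=\big\|\sum_j \xi_j \tilde h_j\big\|$, so the isometry is $A(h_j)=\tilde h_j$ (equivalently $A(h_j/\|h_j\|_F)=\tilde h_j/\|h_j\|_F$). With your formula, $\|A(h_j)\|/\|h_j\|=1/\|h_j\|_F\to\infty$, so $A$ is unbounded and the claim $\|A\|\,\|B\|\le1$ fails. Correspondingly $B$ should be $B(f)=\sum_j\langle f,\tilde h_j/\|h_j\|_{F^\ast}\rangle\,h_j/\|h_j\|_F$ (i.e.\ $A^{-1}P$ with the correct $A$), not $\sum_j\langle\cdot\rangle h_j$. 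Once you fix this, the computation of $BTA(h_j)$ lands in the \emph{normalized} Haar system and the off-diagonal estimate goes through cleanly.

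On that estimate: the detour through Proposition \ref{Prim.}(a) and $\|\cdot\|_F\le\|\cdot\|_{L_\infty}$ is a red herring. What the paper (and, implicitly, you) actually use is: write $f=\sum_j a_j\,h_j/\|h_j\|_F$ with $\|f\|\le1$; monotonicity of the Haar basis gives $|a_j|\le2$; then
\[
\|(BTA-D)f\|\le\sum_j|a_j|\sum_{i\ne j}\Big|\Big\langle T\frac{\tilde h_j}{\|h_j\|_F},\frac{\tilde h_i}{\|h_i\|_{F^\ast}}\Big\rangle\Big|\le 2\eta,
\]
using only the triangle inequality on the normalized vectors $h_i/\|h_i\|_F$ and condition (i). No $L_\infty$ comparison is needed, and attempting to bound $\|\sum_i c_i h_i\|_F$ by $\|\sum_i c_i h_i\|_{L_\infty}$ would not help since the $h_i$ are not disjointly supported.
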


\vspace{.03 in}

\begin{proof}
Let $G = \overline{\spanS(\Tilde{h}_j: j\in \mathbb{N})}$. It follows from Corollary \ref{Cor} that the operator $A: F\to G$, defined by

\begin{equation}
A(h_j) = \Tilde{h}_j,\; j\in \mathbb{N},
\end{equation}

\noindent is an isometry. Thus the inverse operator $A^{-1}: G\to F$, with $A^{-1}(\Tilde{h}_j) = h_j$, is well defined and is also an isometry.

\noindent Also, we deduce from Proposition \ref{L3.7} that $P: F\to G$, defined by
 
\begin{equation}
P(f) = \sum\limits_{j=1}^\infty\Big\langle f, {\frac{\Tilde{h}_j}{\|h_j\|_{F^\ast}}}\Big\rangle {\frac{\Tilde{h}_j}{\|h_j\|_F}}, \; f\;\mathrm{in}\; F,
\end{equation}

\noindent is a well defined projection onto $G$ with its norm satisfying $\|P\| = 1$. We denote $B = A^{-1} P$. It follows that for $j\in \mathbb{N}$:

\begin{align}\label{APTA} 
BTA (h_j)& = A^{-1} P T A (h_j)\\
&= A^{-1} P T (\Tilde{h}_j) = A^{-1} P \Big(T (\Tilde{h}_j)\Big)\notag\\
&= A^{-1} \Big( \sum\limits_{i=1}^\infty\Big\langle T (\Tilde{h}_j), {\frac{\Tilde{h}_i}{\|h_i\|_{F^\ast}}}\Big\rangle {\frac{\Tilde{h}_i}{\|h_i\|_F}}\Big)\notag\\
&= \sum\limits_{i=1}^\infty\Big\langle T (\Tilde{h}_j), {\frac{\Tilde{h}_i}{\|h_i\|_{F^\ast}}}\Big\rangle A^{-1}\Big({\frac{\Tilde{h}_i}{\|h_i\|_F}}\Big)\notag\\
&=\sum\limits_{i=1}^\infty\Big\langle T (\Tilde{h}_j), {\frac{\Tilde{h}_i}{\|h_i\|_{F^\ast}}}\Big\rangle {\frac{h_i}{\|h_i\|_F}}.\notag
\end{align}

\noindent Thus, by \eqref{D} and \eqref{APTA}, it follows for every $j$ in $\mathbb{N}$:

\begin{align}\label{diff}
&\|B T A (h_j) - D(h_j)\|
=\|A^{-1} P T A (h_j) - D(h_j)\|\\
=&\bigg\|\sum\limits_{i=1}^\infty\Big\langle T (\Tilde{h}_j), {\frac{\Tilde{h}_i}{\|h_i\|_{F^\ast}}}\Big\rangle{\frac{h_i}{\|h_i\|_F}} - \Big\langle T (\Tilde{h}_j), {\frac{\Tilde{h}_j}{\|h_j\|_{F^\ast}}}\Big\rangle {\frac{h_j}{\|h_j\|_F}} \bigg\|\notag\\
=&\bigg\|\underset{i\neq j}{\sum\limits_{i=1}^\infty}\Big\langle T (\Tilde{h}_j), {\frac{\Tilde{h}_i}{\|h_i\|_{F^\ast}}}\Big\rangle {\frac{h_i}{\|h_i\|_F}}\bigg\|\notag\\
\le&\underset{i\neq j}{\sum\limits_{i=1}^\infty}\bigg|\bigg\langle T (\Tilde{h}_j), {\frac{\Tilde{h}_i}{\|h_i\|_{F^\ast}}}\bigg\rangle\bigg|\cdot\bigg\|{\frac{h_i}{\|h_i\|_F}}\bigg\|\notag\\
=& \underset{i\neq j}{\sum\limits_{i=1}^\infty}\Big|\Big\langle T (\Tilde{h}_j), {\frac{\Tilde{h}_i}{\|h_i\|_{F^\ast}}}\Big\rangle\Big|.\notag
\end{align}

\noindent Let us take any
\begin{equation}
f = \sum\limits_{j=1}^\infty a_j {\dfrac{h_j}{\|h_j\|_F}}\;\mathrm{in}\; F,
\end{equation}
\noindent such that $\|f\|\le 1$.

\vspace{.1 in}

\noindent Since the Haar basis is monotone, $|a_n|\le 2$ for all $n$.

\noindent Thus, it follows from \eqref{diff} that:

\begin{align} \label{2lamet}
&\|B T A (f) - D(f)\| = \|A^{-1} P T A (f) - D(f)\|\\
\notag\le&\sum\limits_{j=1}^\infty {\dfrac{|a_j|}{\|h_j\|_F}} \cdot \|A^{-1} P T A (h_j) - D(h_j)\|\\
\notag\le&\sum\limits_{j=1}^\infty {\dfrac{|a_j|}{\|h_j\|_F}} \cdot \underset{i\neq j}{\sum\limits_{i=1}^\infty} \Big|\Big\langle T (\Tilde{h}_j), {\frac{\Tilde{h}_i}{\|h_i\|_{F^\ast}}}\Big\rangle\Big|\\
\notag\le&2 \sum\limits_{j=1}^\infty \underset{i\neq j}{\sum\limits_{i=1}^\infty} \Big|\Big\langle T \Big({\dfrac{\Tilde{h}_j}{\|h_j\|_F}}\Big), {\dfrac{\Tilde{h}_i}{\|h_i\|_{F^\ast}}}\Big\rangle\Big| < 2\eta,\\\notag
\end{align}

\noindent where the last inequality follows by Lemma \ref{main.}. This means that:

\begin{equation}\label{norm}
\|BTA - D\| = \|A^{-1} PTA - D\| \le 2 \eta.
\end{equation}

\noindent We have the following estimates for the product of the norms of operators $A$ and $B$ and for the norm of $D$:

\begin{equation}
\|A\| \|B\| = \|A\| \cdot \|A^{-1} P\| \le \|A\| \cdot \|A^{-1}\|\cdot \|P\| \le 1.
\end{equation}

\begin{equation}
\|D\| \le \|BTA\| + 2 \eta \le  \|B\| \cdot \|T\|\cdot \|A\| + 2 \eta \le \|T\| + 2 \eta.
\end{equation}

\end{proof}

\section{Factorization of the identity through a bounded operator with a large diagonal}

\begin{cor}[Factorization of the identity through a bounded operator with a large diagonal]\label{uncond.} Let $F$ be a Haar system space, for which the Haar system is an unconditional Schauder basis and assume that the Rademacher sequence $(r_n)_{n=1}^\infty = \Big(\sum\limits_{I\in \mathcal{D}_n} h_I\Big)_{n=1}^\infty$ is weakly null.

\noindent Let $T: F\to F$ be a bounded linear operator, so that for any $j\in \mathbb{N}$,
$$
|\langle T h_j, h_j\rangle| \ge \delta \|h_j\|_F \|h_j\|_{F^\ast} = \delta |I_j|.
$$  

\noindent Then there exist operators $A, B: F\to F$ so that the identity operator on $F$, which we denote by $I_F$, factors through $T$. That is, the following diagram commutes.

\adjustbox{scale=1.5,center}   
 {
\begin{tikzcd}
F\arrow{r} {T} & F\arrow{d}{B}\\
F\arrow{u}{A} \arrow{r} [below]{I_F} & F\\
\end{tikzcd}
}
\end{cor}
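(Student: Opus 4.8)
The plan is to deduce the Corollary from Theorem \ref{commut} in three moves: first absorb the signs of the diagonal entries so that the hypothesis of Theorem \ref{commut} is met, then use unconditionality to invert the resulting diagonal operator, and finally remove the approximation error $2\eta$ by a Neumann series argument.

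First I would reduce to a \emph{positive} large diagonal. Since $|\langle Th_j,h_j\rangle|\ge\delta|I_j|>0$, the sign $\varepsilon_j=\signS\langle Th_j,h_j\rangle\in\{\pm1\}$ is well defined; let $M\colon F\to F$ be the diagonal operator $M(h_j)=\varepsilon_j h_j$. Because the Haar system is an unconditional basis of $F$, $M$ is a bounded isomorphism of $F$ onto itself (indeed $M^2=I_F$), with $\|M\|$ controlled by the unconditionality constant of the basis. Using that the Haar functions are orthogonal in $L^2$, one checks that for every $j$
\[
\langle (MT)h_j,h_j\rangle=\varepsilon_j\langle Th_j,h_j\rangle=|\langle Th_j,h_j\rangle|\ge\delta|I_j|,
\]
so the bounded operator $MT\colon F\to F$ satisfies the hypothesis of Theorem \ref{commut} with the same $\delta$ (and the Rademacher sequence is weakly null by assumption).

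Next I would apply Theorem \ref{commut} to $MT$ with a suitably small $\eta>0$. This yields a faithful Haar system $(\tilde h_j)_j$, the diagonal operator $D(h_j)=d_j h_j$ with $d_j=\langle (MT)(\tilde h_j/\|h_j\|_F),\tilde h_j/\|h_j\|_{F^\ast}\rangle$, and operators $A,B\colon F\to F$ with $\|A\|\,\|B\|\le1$, $\|D-B(MT)A\|\le 2\eta$ and $\|D\|\le\|MT\|+2\eta$. By part (ii) of Lemma \ref{main.} together with Proposition \ref{Prim.} we get $\delta\le d_j\le\|D\|<\infty$ for all $j$. Since the Haar basis is unconditional and the entries $1/d_j$ of the formal inverse satisfy $0<1/d_j\le 1/\delta$, the diagonal operator $D^{-1}(h_j)=d_j^{-1}h_j$ is bounded with $\|D^{-1}\|\le K/\delta$, where $K$ depends only on the unconditionality constant of the Haar basis; in particular $D$ is an isomorphism and, crucially, this bound on $\|D^{-1}\|$ does not depend on $\eta$. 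Hence I may fix $\eta$ from the start with $2\eta K/\delta<1$, i.e.\ $\|D^{-1}\|\,\|D-B(MT)A\|<1$.

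Finally, I would write $B(MT)A=D\bigl(I_F-D^{-1}(D-B(MT)A)\bigr)$; the operator inside the parentheses is invertible by the Neumann series, so $B(MT)A$ is invertible, and with $E=(B(MT)A)^{-1}$ bounded on $F$ one has $(EBM)\,T\,A=E\,B\,(MT)\,A=I_F$. Thus the identity of $F$ factors through $T$ with $\mathcal{B}=EBM$ and $\mathcal{A}=A$, which is the asserted commuting diagram. The only ingredient beyond Theorem \ref{commut} is unconditionality, used to make the sign operator $M$ and the inverse diagonal operator $D^{-1}$ bounded; accordingly, the one point that needs care is the order of quantifiers around $\eta$ — verifying that $\|D^{-1}\|$ is bounded by a constant independent of the output $(D,A,B)$ of Theorem \ref{commut}, so that $\eta$ can legitimately be chosen in advance.
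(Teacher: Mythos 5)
Your proof is correct and follows essentially the same route as the paper: absorb the signs of the diagonal entries with a diagonal sign operator (bounded by unconditionality of the Haar basis) and then invoke Theorem \ref{commut}. The paper's own proof stops at ``the claim follows from Theorem \ref{commut}''; your Neumann-series completion --- inverting $D$ with $\|D^{-1}\|\le K/\delta$ independent of $\eta$, fixing $\eta$ in advance so that $2\eta K/\delta<1$, and then writing $I_F=(EBM)TA$ --- is precisely the argument left implicit there, and you correctly identify and resolve the one delicate point, namely that the bound on $\|D^{-1}\|$ does not depend on the output of Theorem \ref{commut}, so $\eta$ may be chosen first.
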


\begin{proof}
\noindent Without loss of generality we can assume that $\langle Th_i, h_i \rangle\ge 0$, because otherwise we pass to $\Tilde{T} = T\circ \Bar{D}$, for which the diagonal operator $\Bar{D}:F\to F$ is defined by $\Bar{D}(h_i) = \Big(\signS \langle Th_i, h_i \rangle\Big) h_i$. Since $(h_i)$ is unconditional, $\Bar{D}$ is bounded, and thus $\Tilde{T}$ is bounded. In this case we compute: 

\begin{align}\label{Til}
\langle \Tilde{T} h_j, h_j\rangle &= \langle T\circ \Bar{D} (h_j), h_j\rangle = \Big\langle T\Big( \signS \langle Th_j, h_j \rangle h_j\Big), h_j\Big\rangle\\ 
\notag&=\signS \langle Th_j, h_j \rangle \langle Th_j, h_j\rangle = |\langle Th_j, h_j\rangle|\ge\delta\|h_j\|_F\|h_j\|_{F^\ast},
\end{align}

\noindent and thus the claim follows from Theorem \ref{commut}.

\end{proof}

\section{Open questions}

\noindent We might want to check under what conditions the identity is factored through a bounded operator with a large diagonal when we do not assume unconditionality for the Haar system in a Haar system space. Also, we want to find other Banach function spaces for which the Haar system is a basis and satisfies the factorization property.

% ------------------------------------------------------------------------

\subsection*{Acknowledgment}
The author thanks Professor Thomas Schlumprecht for the helpful discussions and important suggestions.

% ------------------------------------------------------------------------
\end{document}